\theoremstyle{theorem}
\newtheorem{theorem}{Theorem}[section]
\newtheorem{lemma}{Lemma}
\theoremstyle{definition}
\newtheorem{remark}{Remark}
\newtheorem{problem}{Problem}
\begin{document}

\title{\bf Spectral analogues of Moon-Moser's theorem on Hamilton paths in
bipartite graphs}

\date{}

\author{Binlong Li\thanks{Department of Applied Mathematics,
Northwestern Polytechnical University, Xi'an, Shaanxi 710072, P.R.
China. Email: libinlong@mail.nwpu.edu.cn.}~ and Bo
Ning\thanks{Corresponding author. Center for Applied Mathematics,
Tianjin University, Tianjin 300072, P. R. China. Email:
bo.ning@tju.edu.cn}}\maketitle

\begin{center}
\begin{minipage}{130mm}
\small\noindent{\bf Abstract:} In 1962, Erd\H{o}s proved a theorem on
the existence of Hamilton cycles in graphs with given minimum degree and
number of edges. Significantly strengthening in case of balanced bipartite graphs,
Moon and Moser proved a corresponding theorem in 1963. In this paper we establish several
spectral analogues of Moon and Moser's theorem on Hamilton
paths in balanced bipartite graphs and nearly balanced bipartite graphs.
One main ingredient of our proofs is a structural result of its own interest,
involving Hamilton paths in balanced bipartite graphs with given minimum
degree and number of edges.

\smallskip
\noindent{\bf Keywords:} Spectral analogues; Moon-Moser's theorem;
Hamilton path; Balanced bipartite graphs; Nearly
balanced bipartite graphs

\smallskip
\noindent {\bf Mathematics Subject Classification (2010): 05C50, 15A18, 05C38}
\end{minipage}
\end{center}

\smallskip

\section{Introduction}
This is a sequel to our previous paper \cite{LN}. In this paper, we are interested
in establishing tight spectral sufficient conditions for Hamilton paths in balanced bipartite
graphs and nearly balanced bipartite graphs.
Throughout this paper, a bipartite graph with the bipartition $\{X,Y\}$
is called \emph{balanced} if $|X|=|Y|$; and is called \emph{nearly balanced} if
$|X|-|Y|=1$ (by the symmetry). A graph $G$ is called \emph{Hamiltonian} if it contains a
spanning cycle, and is called \emph{traceable} if it contains a spanning path.

The topic of Hamiltonicity of graphs has a long history.
In 1961, Ore \cite{O} proved that
every graph on $n$ vertices has a Hamilton cycle if $e(G)>\binom{n-1}{2}+1$.
One year later, Erd\H{o}s \cite{E} generalized Ore's theorem by introducing the minimum degree
of a graph as a new parameter. More precisely, Erd\H{o}s proved that

\begin{theorem}[Erd\H{o}s \cite{E}]
Let $G$ be a graph on $n$ vertices, with minimum degree $\delta(G)$.
If $n/2>\delta(G)\geq k\geq1$, and
\[
e(G)>\max\left\{\binom{n-k}{2}+k^2,\binom{n-\lfloor\frac{n-1}{2}\rfloor}{2}+\left\lfloor\frac{n-1}{2}\right\rfloor^2\right\},
\]
then $G$ is Hamiltonian.
\end{theorem}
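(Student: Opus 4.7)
The plan is to prove the contrapositive: assume $G$ is non-Hamiltonian with $\delta(G)\geq k$, and derive the stated upper bound on $e(G)$. The cleanest route is via Chv\'atal's degree-sequence theorem, which asserts that if the degrees $d_1\leq d_2\leq\cdots\leq d_n$ of $G$ satisfy, for every $1\leq i<n/2$, either $d_i>i$ or $d_{n-i}\geq n-i$, then $G$ is Hamiltonian. Its contrapositive supplies an integer $i$ with $d_i\leq i$ and $d_{n-i}\leq n-i-1$. Since $d_i\geq\delta(G)\geq k$, we must have $i\geq k$, and the hypothesis $k<n/2$ places $i$ in the admissible range $k\leq i\leq\lfloor(n-1)/2\rfloor$.

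Next, I would bound $2e(G)=\sum_j d_j$ by splitting the sorted degree sequence into three blocks: the first $i$ terms are each at most $i$ (since $d_i\leq i$), the middle $n-2i$ terms are each at most $n-i-1$ (since $d_{n-i}\leq n-i-1$), and the last $i$ terms are each at most $n-1$. Summing yields
\[
2e(G)\;\leq\;i^{2}+(n-2i)(n-i-1)+i(n-1)\;=\;(n-i)(n-i-1)+2i^{2},
\]
so $e(G)\leq\binom{n-i}{2}+i^{2}=:f(i)$.

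It then remains to maximise $f$ over the integer interval $k\leq i\leq\lfloor(n-1)/2\rfloor$. A direct computation gives the first difference $f(i+1)-f(i)=3i-n+2$, which is strictly increasing in $i$, so $f$ is discretely convex and attains its maximum on any interval at one of its endpoints. Consequently $e(G)\leq\max\{f(k),\,f(\lfloor(n-1)/2\rfloor)\}$, contradicting the hypothesis. Sharpness of the first term is witnessed by the join of $K_k$ with the disjoint union of $\overline{K_k}$ and $K_{n-2k}$, and an analogous construction handles the second term. The main obstacles I foresee are purely bookkeeping: verifying the algebraic identity above, handling the parity of $n$ at $i=\lfloor(n-1)/2\rfloor$ so that $i<n/2$ remains strictly compatible with Chv\'atal's hypothesis, and checking that the strict inequality on $e(G)$ yields a strict contradiction in both boundary cases.
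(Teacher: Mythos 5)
This theorem is quoted in the paper as background (Erd\H{o}s's 1962 result, cited from \cite{E}); the paper itself gives no proof of it, so there is no in-paper argument to compare against. Judged on its own, your proposal is correct and complete. The contrapositive of Chv\'atal's degree-sequence theorem does yield an index $i$ with $d_i\leq i$ and $d_{n-i}\leq n-i-1$, the hypothesis $k\leq\delta(G)<n/2$ does force $k\leq i\leq\lfloor(n-1)/2\rfloor$ (for integers, $i<n/2$ is exactly $i\leq\lfloor(n-1)/2\rfloor$, so the parity worry you raise is vacuous), and I checked your two computations: the three-block estimate $i^2+(n-2i)(n-i-1)+i(n-1)$ does simplify to $(n-i)(n-i-1)+2i^2$, and $f(i+1)-f(i)=3i+2-n$ is increasing in $i$, so the discretely convex $f$ is maximized at an endpoint of the interval, giving $e(G)\leq\max\bigl\{f(k),f(\lfloor(n-1)/2\rfloor)\bigr\}$ and the desired contradiction with the strict hypothesis on $e(G)$. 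Your sharpness example $K_k\vee(\overline{K_k}\cup K_{n-2k})$ is also the standard extremal graph and indeed has exactly $\binom{n-k}{2}+k^2$ edges. Two remarks for perspective: your argument is anachronistic in that Erd\H{o}s's original 1962 proof predates Chv\'atal's 1972 theorem and instead ran along P\'osa-type arguments (counting vertices of small degree); relying on Chv\'atal buys you the sharper per-block bounds ($d_{n-i}\leq n-i-1$ on the middle block, not just $n-1$) that make the single clean estimate close, at the cost of invoking a result strictly stronger than what was historically available. Either way, as a verification of the stated theorem your proof stands.
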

Motivated by Erd\H{o}s' work \cite{E},
Moon and Moser \cite{MM} presented some corresponding results
for balanced bipartite graphs. We state one of their theorems as follows,
which is the starting point of our present paper.

\begin{theorem}[Moon and Moser \cite{MM}]\label{ThMoMo}
Let $G$ be a balanced bipartite graph on $2n$ vertices, with minimum degree
$\delta(G)\geq k$, where $1\leq k\leq n/2$. If
$$e(G)>\max\left\{n(n-k)+k^2,n\left(n-\left\lfloor\frac{n}{2}\right\rfloor\right)+{\left\lfloor\frac{n}{2}\right\rfloor}^2\right\},$$
then $G$ is Hamiltonian.
\end{theorem}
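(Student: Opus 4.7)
The plan is to imitate the classical proof of Erd\H{o}s's 1962 theorem, replacing the Bondy--Chv\'atal closure with its bipartite analogue. The key tool is Moon and Moser's bipartite Ore-type theorem: in a balanced bipartite graph with bipartition $(X,Y)$ and $|X|=|Y|=n$, if $d(x)+d(y)\geq n+1$ for every non-adjacent pair $x\in X$, $y\in Y$, then $G$ is Hamiltonian. Iteratively adding such edges yields a well-defined bipartite closure that preserves both the Hamiltonicity status and the minimum degree. Arguing by contradiction, I would assume $G$ is not Hamiltonian and replace $G$ by its closure; henceforth every non-adjacent pair $(x,y)\in X\times Y$ satisfies $d(x)+d(y)\leq n$.

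Since $K_{n,n}$ is Hamiltonian, after closure $G$ is not complete bipartite; among vertices with a non-neighbor on the opposite side I pick one of minimum degree, say $u\in X$ with $d(u)=s$. Then $s\geq\delta(G)\geq k$, and the closure inequality applied to $u$ and any non-neighbor $v\in Y$ (whose degree is at least $k$) forces $s\leq n-k$, so $s$ sits in the window $[k,n-k]$. I would then split $Y$ into $N(u)$ and $Y\setminus N(u)$ and bound
$$e(G)=\sum_{y\in Y}d(y)\leq s\cdot n+(n-s)(n-s)=n^{2}-ns+s^{2},$$
where the first summand is the trivial upper bound on $Y$-degrees and the second uses that every $y\in Y\setminus N(u)$ is a non-neighbor of $u$, hence has $d(y)\leq n-s$ by the closure condition.

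The function $s\mapsto n^{2}-ns+s^{2}$ is a convex parabola with minimum at $s=n/2$, so on the admissible integer range $s\in[k,n-k]$ it attains its maximum $n(n-k)+k^{2}$ at the endpoints and its minimum near the middle, matching exactly the two quantities $n(n-k)+k^{2}$ and $n(n-\lfloor n/2\rfloor)+\lfloor n/2\rfloor^{2}$ appearing in the hypothesis; in particular $e(G)$ cannot exceed their maximum, which is the desired contradiction. The only real point of care is the bookkeeping around the closure step -- verifying that $\delta\geq k$ and non-Hamiltonicity both survive, and that the resulting low-degree vertex $u$ genuinely satisfies $s\leq n-k$. After that the argument reduces to a clean two-block degree count plus a one-variable convexity observation.
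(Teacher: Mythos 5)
Your proof is correct, but there is nothing in the paper to compare it against: Theorem~\ref{ThMoMo} is quoted as background from Moon and Moser \cite{MM}, and the paper never proves it. Your route --- pass to the bipartite closure, pick a vertex $u$ with a non-neighbour on the other side, and bound $e(G)\leq sn+(n-s)^2$ where $s=d(u)\in[k,n-k]$ --- is the standard closure-based proof, and it uses exactly the machinery the paper itself relies on in Section~4 (the B-closure and Theorem~\ref{ThBoCh} of Bondy and Chv\'atal). Two remarks. First, the closure theorem is not a formal corollary of the Ore-type statement you quote; it needs the Bondy--Chv\'atal lemma that for non-adjacent $x,y$ with $d(x)+d(y)\geq n+1$, the graph $G$ is Hamiltonian if and only if $G+xy$ is. Since this is precisely the cited Theorem~\ref{ThBoCh}, this is a citation issue, not a gap. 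Second, your convexity step in fact proves slightly more than you state: the admissible range $[k,n-k]$ is symmetric about the vertex $s=n/2$ of the parabola $f(s)=n^2-ns+s^2$, so the maximum over the range is $f(k)=f(n-k)=n(n-k)+k^2$ alone. The second quantity $n(n-\lfloor n/2\rfloor)+\lfloor n/2\rfloor^2$ in the hypothesis is the minimum of $f$ over the range, and it never exceeds $n(n-k)+k^2$ when $k\leq n/2$; so that term is redundant, and your argument establishes the formally stronger statement with threshold $n(n-k)+k^2$ (the max-of-two formulation merely mirrors Erd\H{o}s' theorem, where the analogous parabola has an interior minimum and both endpoint values genuinely compete). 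Finally, choosing $u$ of minimum degree among vertices with a non-neighbour is unnecessary --- any vertex with a non-neighbour yields the count --- but it does no harm.
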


Compared with the number of edges of graphs, eigenvalues of graphs
are also very powerful for describing the structure of graphs. There are
several well known examples, such as the spectral proof of
Friendship Theorem \cite{ERS}. For Hamiltonicity
of graphs, early pioneer work include those of van den Heuvel \cite{H},
Krivelevich and Sudakov \cite{KS}, Butler and Chung \cite{BuC}, etc.

Recently, spectral extremal graph theory has rapidly developed,
where extremal properties of graphs are studied
by means of eigenvalues of associated matrices of graphs.
In this area, many beautiful and deep results have been proved,
such as a spectral Tur\'an theorem \cite{N09JCTB}, a spectral
Erd\H{o}s-Stone-Bollob\'as theorem \cite{N09CPC}, a spectral
version of Zarankiewicz problem \cite{N10_1}, spectral sufficient
conditions for paths and cycles \cite{N10_2,YWZ,ZLG}, etc.
For an excellent survey on recent development of spectral
extremal graph theory, we refer the reader to Nikiforov \cite{N}.
In particular, on the topic of Hamiltonicity, Fielder and Nikiforov \cite{FN}
gave spectral analogues of Ore's theorem \cite{O}. More work in this vein
can be found in Zhou \cite{Z}, Lu, Liu and Tian \cite{LLT},
as well as Liu, Shiu and Xue \cite{LSX}. Towards finding spectral
analogues of Erd\H{o}s' theorem, the first attempt was
made by the second author and Ge \cite{NG}, and finally was
completed by the present authors in \cite{LN}. In the meantime,
the authors obtained some spectral analogues of Moon-Moser's theorem
for Hamilton cycles in balanced bipartite graphs \cite{LN}.

One may look for spectral conditions for Hamilton paths in bipartite graphs.
The situation seems a little more complicated. The main reason is
that every traceable bipartite graph must be balanced, or nearly balanced.
That is, there are two situations for us to explore.

In fact, we need to consider the following two Brualdi-Solheid-Tur\'an-type
problems. Here we use $\widehat{G}$ to denote the \emph{quasi-complement} of
a bipartite graph $G$ with the bipartition $\{X,Y\}$,
i.e., one with vertex set $V(\widehat{G})=V(G)$ and for any $x\in X$
and $y\in Y$, $xy\in E(\widehat{G})$ if and only if $xy\notin E(G)$;
and we use $\rho(G)$ and $q(G)$ to denote the spectral radius and signless Laplacian
spectral radius of $G$, respectively.

\begin{problem}\label{ProbBlBi}
Among all non-traceable balanced bipartite graphs $G$ on $2n$ vertices,
with $\delta(G)\geq k$, determine $\max\rho(G), \min\rho(\widehat{G}),
\max q(G)$ and $\min q(\widehat{G})$,
respectively.
\end{problem}

\begin{problem}\label{ProbNeBlBi}
Among all non-traceable nearly balanced bipartite graphs $G$ on
$2n-1$ vertices, with $\delta(G)\geq k$, determine
$\max\rho(G), \min\rho(\widehat{G}), \max q(G)$ and $\min
q(\widehat{G})$, respectively.
\end{problem}

In this paper, we solve the above problems for graphs of
sufficiently large order. The main theorems and related notation
are given in Section 2.

In order to solve these problems, we need to use several spectral inequalities
and convert the original problems into new ones involving the number of edges.
We also use spectral inequalities to characterize the extremal graphs.
In particular, we prove spectral inequalities to compare the
(signless Laplacian) spectral radii of certain types of graphs.
These are given in Section 3.

The proofs of our main theorems also need detailed structural analysis.
We need to use the closure theory of Hamilton cycles in balanced bipartite
graph due to Bondy and Chvat\'al \cite{BoC}. With the help of this theory, we
need to use an analogous theorem for Hamilton paths in balanced bipartite graphs.
We establish a theorem on the existence of a complete bipartite subgraph with
large order in a balanced bipartite graph with sufficiently many edges.
We also prove a theorem on the existence of Hamilton paths in a balanced
bipartite graph with given number of edges. All these structural
lemmas and proofs are given in Section 4.

In Section 5, we prove our main theorems. Finally, in Section 6,
we conclude the paper with some remarks and problems.
\section{Main theorems}
\subsection{Notation}
To describe all extremal graphs in our coming theorems,
we introduce some terminology and notation.
We use $\mathcal{G}_{m,n}$ to denote the set of bipartite graphs
with partition sets of sizes $m$ and $n$. As usual, $K_{m,n}$ denotes the
complete bipartite graph, and we set
$\varPhi_{m,n}=\widehat{K_{m,n}}$. In this paper, when we mention a
bipartite graph, we always fix its partition sets, e.g.,
$\varPhi_{m,n}$ and $\varPhi_{n,m}$ are considered as different
bipartite graphs, unless $m=n$ (although they are both the empty
graphs of order $m+n$).

Let $G_1,G_2$ be two bipartite graphs, with the bipartition $\{X_1,Y_1\}$
and $\{X_2,Y_2\}$, respectively. We use $G_1\sqcup G_2$ to denote the graph
obtained from $G_1\cup G_2$ by adding all possible edges between
$X_1$ and $Y_2$ and all possible edges between $Y_1$ and $X_2$. We
set
$$B_n^k=K_{k,n-k}\sqcup\varPhi_{n-k,k} \mbox{ and }
\mathcal{B}_n^k=\{H\sqcup\varPhi_{n-k,k}: H\in\mathcal{G}_{k,n-k}\}\
(1\leq k\leq n/2).$$
The graphs $B_n^k$ play a crucial role in the proofs of results in \cite{LN}.
Notice that $B_n^k$ is the graph in $\mathcal{B}_n^k$ with the largest
number of edges. We remark that for any (spanning) subgraph $G$ of $B_n^k$,
$\rho(\widehat{G})=\rho(\widehat{B_n^k})$
($q(\widehat{G})=q(\widehat{B_n^k})$) if and only if
$G\in\mathcal{B}_n^k$.

We define some classes of graphs as follows:
\begin{align*}
\begin{array}{ll}
Q_n^k=K_{k,n-k-1}\sqcup\varPhi_{n-k,k+1}  & (0\leq k\leq(n-1)/2), \\
R_n^k=K_{k,k}\cup K_{n-k,n-k}             & (1\leq k\leq n/2), \\
S_n^k=K_{k,n-k-1}\sqcup\varPhi_{n-k,k}    & (1\leq k\leq(n-1)/2),\\
\mathcal{S}_n^k=\{H\sqcup\varPhi_{k,n-k}: H\in\mathcal{G}_{n-k-1,k}\} & (1\leq k\leq(n-1)/2),\\
T_n^k=K_{k,n-k-1}\sqcup\varPhi_{n-k-1,k+1}& (0\leq k\leq n/2-1),\\
\mathcal{T}_n^k=\{H\sqcup\varPhi_{k+1,n-k-1}:
H\in\mathcal{G}_{n-k-1,k}\} & (0\leq k\leq n/2-1).
\end{array}
\end{align*}

Additionally, let $\varGamma_n^0=K_{n-2,n}\cup K_{1,0}$ and let
\emph{$\L$} be the graph in Fig. 1.

\begin{center}
\begin{picture}(80,60)\label{fi1}
\thicklines \multiput(20,30)(20,0){3}{\put(0,0){\circle*{4}}}
\put(20,30){\line(1,0){40}}
\multiput(40,10)(0,40){2}{\multiput(0,0)(20,0){2}{\circle*{4}}
\put(0,0){\line(1,0){20}}} \put(20,30){\line(1,1){20}}
\put(20,30){\line(1,-1){20}}
\end{picture}

\small Fig. 1. The graph $\emph{\L}$.
\end{center}

Note that $S_n^k$ is the graph in $\mathcal{S}_n^k$ with the largest
number of edges, and $T_n^k$ is the graph in $\mathcal{T}_n^k$ with
the largest number of edges. Similarly, we remark that for any
(spanning) subgraph $G$ of $S_n^k$,
$\rho(\widehat{G})=\rho(\widehat{S_n^k})$
($q(\widehat{G})=q(\widehat{S_n^k})$) if and only if
$G\in\mathcal{S}_n^k$; and for any (spanning) subgraph $G$ of
$T_n^k$, $\rho(\widehat{G})=\rho(\widehat{T_n^k})$
($q(\widehat{G})=q(\widehat{T_n^k})$) if and only if
$G\in\mathcal{T}_n^k$.
\subsection{Main results}
In this subsection, we state all our main theorems. Since we consider
the classes of balanced bipartite graphs and nearly balanced bipartite
graphs, and for each class of graphs, we consider sufficient conditions
in terms of (signless Laplacian) spectral radii of graphs or the complements,
we obtain eight theorems as follows.

For balanced bipartite graphs, we have
\begin{theorem}\label{ThrhoBG}
Let $G$ be a balanced bipartite graph on $2n$ vertices, with minimum
degree $\delta(G)\geq k$, where $k\geq 0$ and $n\geq(k+2)^2$.\\
(1) If $k\neq 1$ and $\rho(G)\geq\rho(Q^k_n)$,
then $G$ is traceable unless $G=Q^k_n$.\\
(2) If $k=1$ and $\rho(G)\geq\rho(R^1_n)$, then $G$ is traceable
unless $G=R^1_n$.
\end{theorem}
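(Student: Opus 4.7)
The plan is to run the standard spectral $\Rightarrow$ edge-count $\Rightarrow$ structural pipeline. Suppose for contradiction that $G$ is a non-traceable balanced bipartite graph of order $2n$ with $\delta(G)\ge k$ satisfying $\rho(G)\ge \rho(Q_n^k)$ when $k\neq 1$ (resp.\ $\rho(G)\ge \rho(R_n^1)$ when $k=1$), and that $G\neq Q_n^k$ (resp.\ $G\neq R_n^1$). The first step is to convert the spectral hypothesis into an edge-count hypothesis: using a Hong-type upper bound for $\rho(G)$ in terms of $e(G)$, the partition sizes and $\delta(G)$, combined with an explicit lower bound for $\rho(Q_n^k)$ or $\rho(R_n^1)$ coming from an appropriate test vector or a quotient matrix of an equitable partition, I expect to derive $e(G)\ge e(Q_n^k)$ (resp.\ $\ge e(R_n^1)$), the hypothesis $n\ge (k+2)^2$ being exactly what is needed to absorb lower-order error terms.

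Second, I would invoke the Moon--Moser-type theorem for Hamilton paths in balanced bipartite graphs announced in Section~4. That theorem should read: a balanced bipartite graph of order $2n$ with $\delta\ge k$ and at least $e(Q_n^k)$ edges is either traceable or isomorphic to a member of a short explicit list of extremal non-traceable graphs, built from the block graphs $B_n^k, Q_n^k, R_n^k, S_n^k, T_n^k$ and the associated subgraph classes $\mathcal{B}_n^k, \mathcal{S}_n^k, \mathcal{T}_n^k$ defined in Section~2. Applied to $G$, this forces $G$ into that list.

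Third, I compare spectral radii within the list. For each candidate $H\neq Q_n^k$ (resp.\ $H\neq R_n^1$) I would show $\rho(H)<\rho(Q_n^k)$ (resp.\ $<\rho(R_n^1)$) by combining the monotonicity remarks made in Section~2 (which identify the spectrum-maximizer inside each class $\mathcal{B}_n^k,\mathcal{S}_n^k,\mathcal{T}_n^k$) with direct analysis of the characteristic polynomials of the small quotient matrices that all the $\sqcup$-constructions admit. The spectral comparison lemmas promised in Section~3 are the natural engine for this step.

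I expect the main obstacle to be exactly this last comparison, specifically the crossover behaviour at $k=1$: the component structure gives $\rho(R_n^k)=n-k$, while $\rho(Q_n^k)$ arises as the largest root of a small quotient polynomial; for $k\ge 2$ one should have $\rho(Q_n^k)>n-k\ge\rho(R_n^k)$, whereas at $k=1$ the inequality reverses, which is precisely what forces the two clauses in the statement. Quantifying this reversal uniformly for all $n\ge (k+2)^2$, and simultaneously ruling out that some ``near-winner'' in $\mathcal{B}_n^k$ or $\mathcal{S}_n^k$ overtakes the maximizer near the threshold, is where the technical effort will concentrate.
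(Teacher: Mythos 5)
Your plan is essentially the paper's proof: it converts the spectral hypothesis into an edge count, feeds that into a structural Moon--Moser-type lemma for Hamilton paths (the paper's Lemma~\ref{LeTraceable}, whose exceptional graphs are exactly subgraphs of $Q_n^k$, plus subgraphs of $R_n^1$ when $k=1$), and finishes by Perron--Frobenius monotonicity together with the $k=1$ crossover $\rho(Q_n^1)<\rho(R_n^1)=n-1$ (the paper's Lemma~\ref{LeQn1Sn1}), which is precisely the reversal you single out as the main obstacle. Two calibration remarks: the paper does not (and could not) derive $e(G)\ge e(Q_n^k)$ from the spectral hypothesis --- since $\rho(Q_n^k)<\sqrt{e(Q_n^k)}$ strictly, no bound of the form $\rho\le\sqrt{e}$ can reach that target --- but only $e(G)>n(n-k-1)$ via $\sqrt{e(G)}\ge\rho(G)\ge\rho(Q_n^k)>\sqrt{n(n-k-1)}$, which already clears the structural lemma's lower threshold $n(n-k-2)+(k+2)^2$ exactly when $n\ge(k+2)^2$; and the case $k=0$, which your plan does not separate out, needs its own short argument (if $\delta(G)\ge1$ apply the $k=1$ analysis to get $\rho(G)\le n-1<\sqrt{n(n-1)}=\rho(Q_n^0)$, otherwise $G$ has an isolated vertex, so $G\subseteq Q_n^0$ and monotonicity finishes), because the structural lemma requires $k\ge1$.
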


\begin{theorem}\label{ThqBG}
Let $G$ be a balanced bipartite graph on $2n$ vertices, with minimum
degree $\delta(G)\geq k$, where $k\geq 0$ and $n\geq(k+2)^2$.
If $q(G)\geq q(Q^k_n)$, then $G$ is traceable unless $G=Q^k_n$.
\end{theorem}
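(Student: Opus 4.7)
The plan is to mirror the strategy used for Theorem~\ref{ThrhoBG}, but work directly with the signless Laplacian, since $q(G)$ does not admit an obvious reduction to $\rho(G)$ or to the quasi-complement, so Theorem~\ref{ThrhoBG} cannot be invoked as a black box. I assume that $G$ is a non-traceable balanced bipartite graph on $2n$ vertices with $\delta(G)\ge k$ and $q(G)\ge q(Q_n^k)$, and aim to prove $G=Q_n^k$.

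The first step converts the spectral hypothesis into an edge-count hypothesis. Using a signless Laplacian upper bound from Section~3 of the form $q(G)\le f(e(G),n)$, combined with an explicit lower estimate for $q(Q_n^k)$, the assumption $q(G)\ge q(Q_n^k)$ forces a lower bound on $e(G)$ essentially matching $e(Q_n^k)=k(n-k-1)+k(k+1)+(n-k)(n-k-1)$. This is precisely the regime in which the Moon--Moser--type structural theorem for Hamilton paths announced in Section~4 applies.

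I then invoke that structural theorem, together with the bipartite closure theory, to reduce $G$ to a short list of candidate graphs consisting of the edge-extremal non-traceable balanced bipartite graphs with $\delta(G)\ge k$: namely $Q_n^k$ itself, the graphs $R_n^k$, members of the families $\mathcal{S}_n^k$ and $\mathcal{T}_n^k$ (in particular $S_n^k$ and $T_n^k$), and the sporadic exceptions $\varGamma_n^0$ and $\L$ that arise in the small-$k$ regime. For each candidate $G'\ne Q_n^k$ I compare $q(G')$ with $q(Q_n^k)$ and show $q(G')<q(Q_n^k)$, yielding the desired contradiction.

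The principal obstacle is this last spectral comparison. For graphs such as $T_n^k$, which differs from $Q_n^k$ only by moving one vertex between the two factors of the $\sqcup$ decomposition, $q(T_n^k)$ and $q(Q_n^k)$ share the same leading term in $n$ and the gap is of strictly lower order, so crude bounds are inadequate. To handle this I would use an equitable-partition approach: the $\sqcup$ decomposition induces a natural partition of $V(G')$ into four cells, so $q(G')$ equals the Perron root of an explicit $4\times 4$ quotient matrix whose entries are polynomials in $n$ and $k$. The inequality $q(G')<q(Q_n^k)$ then reduces to a sign comparison between two characteristic polynomials evaluated at a common argument, which can be settled by a monotonicity argument that uses the slack provided by the hypothesis $n\ge(k+2)^2$. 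The sporadic small-order exceptions and the boundary values $k=0,1$ are then dispatched by direct calculation.
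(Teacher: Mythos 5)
Your first step is exactly the paper's: by Theorem~\ref{ThLiNi5} and the bound $q(Q_n^k)>q(K_{n,n-k-1})=2n-k-1$ from Lemma~\ref{LeCompare}, the hypothesis $q(G)\geq q(Q_n^k)$ forces $e(G)/n+n>2n-k-1$, hence $e(G)>n(n-k-1)\geq n(n-k-2)+(k+2)^2$ once $n\geq(k+2)^2$. The genuine gap is in your structural reduction. For a \emph{balanced} bipartite graph on $2n$ vertices, the relevant structural result (Lemma~\ref{LeTraceable}) says that a non-traceable $G$ with $\delta(G)\geq k\geq 1$ and this many edges satisfies $G\subseteq Q_n^k$, or $k=1$ and $G\subseteq R_n^1$ --- nothing else. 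The candidate list you propose is wrong: the families $\mathcal{S}_n^k$, $\mathcal{T}_n^k$, the graph $\varGamma_n^0$ and the graph {\L} all have odd order $2n-1$ (they are the exceptional graphs of the \emph{nearly balanced} theorems), so by parity none of them can equal, or contain, a balanced bipartite graph on $2n$ vertices. They simply cannot arise here.

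As a consequence, the ``principal obstacle'' your proposal is built around --- comparing $q(T_n^k)$ with $q(Q_n^k)$ via a $4\times 4$ equitable-partition quotient matrix --- is a non-problem: that comparison never occurs in the proof. What is actually needed is much lighter: (i) monotonicity, i.e.\ $G\varsubsetneq Q_n^k$ implies $q(G)<q(Q_n^k)$; (ii) for $k=1$, the comparison $q(R_n^1)=q(K_{n-1,n-1})=2n-2<q(Q_n^1)$, which follows at once from $K_{n,n-2}\varsubsetneq Q_n^1$ and Perron--Frobenius (this is Lemma~\ref{LeQn1Sn1}(2), and it is precisely why, unlike the adjacency-radius Theorem~\ref{ThrhoBG}, there is no $R_n^1$ exception in the present theorem); and (iii) for $k=0$, since Lemma~\ref{LeTraceable} requires $k\geq 1$, a split on whether $G$ has an isolated vertex, using $q(Q_n^1)\leq e(Q_n^1)/n+n=2n-2+2/n<2n-1=q(Q_n^0)$ when $\delta(G)\geq 1$, and $G\subseteq Q_n^0$ plus monotonicity otherwise. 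Your plan waves at $k\in\{0,1\}$ as ``direct calculation,'' but these two boundary cases are where the real (if short) content of the proof lies, while the heavy machinery you propose is aimed at cases that are void.
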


\begin{theorem}\label{ThrhoBGC}
Let $G$ be a balanced bipartite graph on $2n$ vertices, with minimum
degree $\delta(G)\geq k$, where $k\geq 0$ and $n\geq 2k$.\\
(1) If $k\geq 1$ and $\rho(\widehat{G})\leq\rho(\widehat{R^k_n})$,
then $G$ is traceable unless $G=R^k_n$. \\
(2) If $k=0$ and $\rho(\widehat{G})\geq\rho(\widehat{Q^0_n})$, then
$G$ is traceable unless $G=Q_n^0$.
\end{theorem}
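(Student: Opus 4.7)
The plan is to argue by contradiction, combining the spectral comparison lemmas of Section~3 with the Hamilton-path analog of Moon-Moser's theorem to be developed in Section~4. Suppose $G$ is non-traceable with $\delta(G)\ge k$ but $G\ne R_n^k$ in part~(1) (respectively $G\ne Q_n^0$ in part~(2)); the goal is to contradict the spectral hypothesis on $\rho(\widehat{G})$.

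First, the bipartite average-degree bound $\rho(\widehat{G})\ge e(\widehat{G})/n$, together with the sharper spectral inequalities from Section~3, converts the hypothesis into a quantitative estimate on $e(G)$. The Hamilton-path analog of Moon-Moser's theorem from Section~4 will then imply that a non-traceable balanced bipartite graph meeting this edge threshold and having $\delta(G)\ge k$ must lie in a short list of ``near-extremal'' structural families. In part~(1) these are built around $R_n^k$, $\mathcal{S}_n^k$, and $\mathcal{T}_n^k$; in part~(2), with no minimum-degree constraint, they are essentially the graphs with an isolated or very low-degree vertex, including $Q_n^0$ together with a few exceptional graphs such as $\varGamma_n^0$.

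Next, for each non-extremal family produced by the structural step, I verify that its quasi-complement is spectrally separated from $\widehat{R_n^k}$ (with $\rho(\widehat{R_n^k})=\sqrt{k(n-k)}$) in part~(1), and analogously from $\widehat{Q_n^0}$ (with $\rho(\widehat{Q_n^0})=\sqrt{n}$) in part~(2), with the right sign to contradict the hypothesis. Whenever $\widehat{R_n^k}$ (respectively $\widehat{Q_n^0}$) sits as a proper subgraph of $\widehat{G}$, Perron-Frobenius monotonicity gives the strict inequality at once; the remaining cases, where the bipartite structure of $\widehat{G}$ genuinely differs, are handled by explicit eigenvector comparisons or edge-shift lemmas from Section~3. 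The equality analysis in those inequalities then forces $G$ to coincide with the unique extremal graph.

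The main obstacle will be the spectral comparison in part~(1) against connected non-traceable competitors, whose quasi-complements need not contain $\widehat{R_n^k}=K_{k,n-k}\cup K_{n-k,k}$ as a subgraph. One must exhibit a sharp separation between $\sqrt{k(n-k)}$ and the Perron root of the competing quasi-complement; the hypothesis $n\ge 2k$ is what allows this, since it makes the $(n-k,n-k)$-component of $R_n^k$ dominant and gives $\widehat{R_n^k}$ its clean product-like eigenstructure. Part~(2) should be cleaner, since $\widehat{Q_n^0}$ is merely $K_{1,n}$ together with $n-1$ isolated vertices and its Perron root $\sqrt{n}$ is straightforward to compare against.
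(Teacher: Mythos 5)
There is a genuine gap, and it sits at the very first step of your plan: for the quasi-complement hypothesis, the conversion to an edge count is quantitatively hopeless. From $\rho(\widehat{G})\le\rho(\widehat{R_n^k})=\sqrt{k(n-k)}$ and $\rho(\widehat{G})\ge e(\widehat{G})/n$ you obtain only
\[
e(G)\;\ge\; n^2-n\sqrt{k(n-k)},
\]
whereas the Hamilton-path analogue of Moon--Moser proved in Section~4 (Lemma \ref{LeTraceable}) requires $e(G)>n(n-k-2)+(k+2)^2=n^2-(k+2)n+(k+2)^2$. Since $n\sqrt{k(n-k)}$ grows like $\sqrt{k}\,n^{3/2}$ while $(k+2)n$ is only linear in $n$, the derived bound falls short of the required one for all $n$ beyond roughly $2k+4$, which is exactly the interesting range. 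Moreover, no sharper spectral inequality can repair this: the extremal graph $R_n^k$ itself satisfies the spectral hypothesis with equality, yet for $k\ge 2$ it has $e(R_n^k)=k^2+(n-k)^2$, strictly below the threshold of Lemma \ref{LeTraceable}; so the implication ``spectral hypothesis $\Rightarrow$ edge threshold'' is simply false. This failure is also why Theorem \ref{ThrhoBGC} needs only $n\ge 2k$, while Theorems \ref{ThrhoBG} and \ref{ThqBG}, whose proofs do go through edge counts, need $n\ge(k+2)^2$.

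The paper's proof takes a different, edge-free route that your proposal misses: non-traceable implies non-Hamiltonian, and the key identity $\rho(\widehat{R_n^k})=\rho(\widehat{B_n^k})=\sqrt{k(n-k)}$ (both quasi-complements are disjoint unions of copies of $K_{k,n-k}$ and isolated vertices) makes the hypothesis of part (1) coincide with the hypothesis of the complement-spectral Hamiltonicity theorem from \cite{LN}, namely Theorem \ref{ThLiNi}(3), which is valid precisely for $n\ge 2k$. That theorem forces $G\in\mathcal{B}_n^k$, or $k=2$, $n=4$ and $G\in\{L_1,L_2\}$; one then checks directly that every graph in $\mathcal{B}_n^k\setminus\{R_n^k\}$, as well as $L_1$ and $L_2$, is traceable, whence $G=R_n^k$. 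Note also that your predicted exceptional families are off target: $\mathcal{S}_n^k$ and $\mathcal{T}_n^k$ consist of nearly balanced graphs on $2n-1$ vertices and cannot arise for a balanced $G$ on $2n$ vertices; the relevant family here is $\mathcal{B}_n^k$. Your treatment of part (2) for graphs with an isolated vertex (the containment $G\subseteq Q_n^0$ plus Perron--Frobenius monotonicity) does match the paper, but your complementary case $\delta(G)\ge 1$ again rests on the broken edge-count step; the paper instead settles it by invoking the $k=1$ case of part (1) together with the comparison $\rho(\widehat{R_n^1})=\sqrt{n-1}<\sqrt{n}=\rho(\widehat{Q_n^0})$.
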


\begin{theorem}\label{ThqBGC}
Let $G$ be a balanced bipartite graph on $2n$ vertices. If
$q(\widehat{G})\leq n$, then $G$ is traceable unless $G\in\{R_n^k:
1\leq k\leq\lfloor n/2\rfloor\}$.
\end{theorem}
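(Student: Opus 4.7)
My plan is to argue by contradiction: suppose \(G\) is a balanced bipartite graph on \(2n\) vertices with \(q(\widehat{G})\le n\), yet \(G\) is non-traceable and \(G\notin\{R_n^k:1\le k\le\lfloor n/2\rfloor\}\); I shall derive a contradiction. A first observation is that \(\widehat{R_n^k}\) is the disjoint union \(K_{k,n-k}\cup K_{n-k,k}\), so, using \(q(K_{a,b})=a+b\) together with the fact that the signless Laplacian spectral radius of a disjoint union is the maximum of those of its components, \(q(\widehat{R_n^k})=n\); hence the bound in the theorem is sharp at each \(R_n^k\).

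The main spectral tool I would use is the monotonicity of \(q\) under subgraph containment: if \(H\subseteq H'\) on the same vertex set, then \(Q(H')-Q(H)\) is entrywise non-negative, so \(q(H)\le q(H')\). Combined with \(q(K_{a,b})=a+b\), the hypothesis \(q(\widehat{G})\le n\) immediately forces \(\widehat{G}\) to contain no \(K_{a,b}\) subgraph with \(a+b>n\). In particular, the maximum degree of \(\widehat{G}\) is at most \(n-1\), so \(\delta(G)\ge 1\).

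I would then apply the Bondy--Chv\'atal-type closure for Hamilton paths in balanced bipartite graphs developed in Section~4, yielding the closure \(G^{*}\). Since this closure preserves non-traceability and \(G\subseteq G^{*}\), we have \(G^{*}\) non-traceable and \(q(\widehat{G^{*}})\le q(\widehat{G})\le n\). The structural classification of closure-maximal non-traceable balanced bipartite graphs established in Section~4 then implies that \(G^{*}\) is either (i) a spanning subgraph of \(Q_n^k\) for some \(0\le k\le (n-1)/2\), or (ii) equal to \(R_n^k\) for some \(1\le k\le\lfloor n/2\rfloor\). In case (i), \(\widehat{G^{*}}\supseteq\widehat{Q_n^k}=K_{n-k,k+1}\), so \(q(\widehat{G^{*}})\ge n+1\), contradicting our bound. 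In case (ii), \(G\subseteq G^{*}=R_n^k\); since \(G\ne R_n^k\), \(G\) is a proper spanning subgraph, and every extra edge of \(\widehat{G}\) over \(\widehat{R_n^k}\) bridges the two complete-bipartite components of \(\widehat{R_n^k}\). A Perron--Frobenius argument---a Perron eigenvector of the disconnected \(\widehat{R_n^k}\) supported on a single component has a positive entry at one endpoint of any such bridging edge---shows this addition strictly raises the largest signless Laplacian eigenvalue, giving \(q(\widehat{G})>n\), a final contradiction.

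The main obstacle is the structural classification from Section~4; once that characterisation of closure-maximal non-traceable balanced bipartite graphs is in hand, the remainder of the proof reduces to spectral monotonicity and a single Perron--Frobenius step.
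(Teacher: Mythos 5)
Your proposal has a genuine gap at its central step: the ``structural classification of closure-maximal non-traceable balanced bipartite graphs'' that you invoke is not established in Section~4, and as you state it (non-traceability of the B-closed graph $G^{*}$ alone implying $G^{*}\subseteq Q_n^k$ or $G^{*}=R_n^k$) it is false. For example, $K_{a,a}\cup K_{b,b}\cup K_{c,c}$ with $a+b+c=n$ and $a,b,c\geq 1$ is B-closed and non-traceable, yet it is neither a spanning subgraph of any $Q_n^k$ nor equal to any $R_n^k$. The only traceability classification in Section~4 is Lemma~\ref{LeTraceable}, and it carries the hypothesis $e(G)>n(n-k-2)+(k+2)^2$ with $\delta(G)\geq k$; since your degree information is only $\delta(G)\geq 1$, you would need $e(G)>n(n-3)+9$, and no such edge bound can be extracted from $q(\widehat{G})\leq n$ --- indeed the extremal graphs $R_n^k$ with $k=\lfloor n/2\rfloor$ satisfy the spectral hypothesis but have only about $n^2/2$ edges. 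If instead you build the spectral hypothesis into the claimed classification so as to exclude such examples, then that classification is essentially the theorem being proved, and the argument becomes circular: all of the difficulty is concentrated in exactly this step.

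The paper's proof avoids this entirely by quoting Theorem~\ref{ThLiNi'} (from \cite{LN}): a non-traceable graph is in particular non-Hamiltonian, and that theorem says a non-Hamiltonian balanced bipartite graph with $q(\widehat{G})\leq n$ lies in $\bigcup_{k=1}^{\lfloor n/2\rfloor}\mathcal{B}_n^k$, or $n=4$ and $G=L_1$ or $L_2$; one then checks that every member of $\mathcal{B}_n^k\setminus\{R_n^k\}$ is traceable (an edge of the graph $H$ in the definition of $\mathcal{B}_n^k$ splices together Hamilton paths of the two complete bipartite blocks $K_{k,k}$ and $K_{n-k,n-k}$), as are $L_1$ and $L_2$, leaving exactly the graphs $R_n^k$. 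Note also that any correct intermediate classification must carry the $n=4$ exceptions $L_1,L_2$, which your dichotomy (i)/(ii) omits. Your peripheral steps are all correct --- the monotonicity of $q$ under edge addition, the deduction $\delta(G)\geq 1$, the computation $q(\widehat{Q_n^k})=n+1$ ruling out case (i), and the Perron--Frobenius argument showing that an edge bridging the two components of $\widehat{R_n^k}$ forces $q(\widehat{G})>n$ in case (ii) --- but they surround a hole where the actual content of the theorem should be.
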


\begin{remark}
Our Theorem 2.1 generalizes Theorem 2.10 in \cite{LSX} due to Liu et al.
\end{remark}

For nearly balanced bipartite graphs, we have
\begin{theorem}\label{ThrhoNBG}
Let $G$ be a nearly balanced bipartite graph on $2n-1$ vertices, with
minimum degree $\delta(G)\geq k$, where $k\geq 0$ and $n\geq(k+1)^2$.\\
(1) If $k\geq 1$ and $\rho(G)\geq\rho(S^k_n)$, then $G$ is traceable
unless $G=S^k_n$. \\
(2) If $k=0$ and $\rho(G)\geq\rho(T^0_n)$, then $G$ is traceable
unless $G=T_n^0$.
\end{theorem}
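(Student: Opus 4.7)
The plan is to follow the template used in the authors' balanced-case Theorems~\ref{ThrhoBG}--\ref{ThqBG}: convert the spectral hypothesis into a lower bound on $e(G)$ via a spectral inequality, apply the Moon--Moser-type structural theorem for Hamilton paths in nearly balanced bipartite graphs that is to be established in Section~4, and finally use direct pairwise comparisons of spectral radii to eliminate every non-traceable candidate except $S^k_n$ (resp.\ $T^0_n$). We argue by contradiction, assuming $G$ is non-traceable and satisfies the spectral hypothesis.

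For part~(1) ($k\ge 1$): I would first describe $\rho(S^k_n)$ via the equitable partition of $S^k_n=K_{k,n-k-1}\sqcup\varPhi_{n-k,k}$ into its four defining blocks $X_1,X_2,Y_1,Y_2$, so that $\rho(S^k_n)$ is the Perron root of an explicit $4\times 4$ quotient matrix, yielding a sharp formula (or at least a sharp lower bound) for $\rho(S^k_n)$. A standard Hong--Nikiforov-type spectral inequality for bipartite graphs with prescribed partition sizes and minimum degree then converts $\rho(G)\ge\rho(S^k_n)$ into $e(G)\ge e(S^k_n)$, with the hypothesis $n\ge(k+1)^2$ providing the required slack. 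The Section~4 Hamilton-path structural theorem now forces $G$ to be either a spanning subgraph of $S^k_n$ (equivalently, an element of $\mathcal{S}^k_n$) or one of a short explicit list of other exceptional non-traceable graphs with $\delta\ge k$. By the remark following the definition of $\mathcal{S}^k_n$, $S^k_n$ is the unique spectrally maximal element of $\mathcal{S}^k_n$, so the first alternative combined with $\rho(G)\ge\rho(S^k_n)$ gives $G=S^k_n$; each graph in the second alternative is then excluded by a pairwise spectral comparison with $S^k_n$, again using $n\ge(k+1)^2$.

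For part~(2) ($k=0$): here $T^0_n$ is simply $K_{n-1,n-1}$ with one extra isolated vertex on the larger side, so $\rho(T^0_n)=n-1$. The trace bound $\rho(G)^2\le\|M\|_F^2=e(G)$ for the bipartite adjacency matrix $M$ of $G$ turns $\rho(G)\ge n-1$ into $e(G)\ge(n-1)^2$. When equality holds, $M$ has rank one, so $G$ consists of a complete bipartite subgraph plus isolated vertices, and among such graphs on parts of sizes $n-1$ and $n$ the only non-traceable one is $T^0_n$ itself (the alternative $K_{n-1,n}$ being trivially traceable). The strict case is handled by combining the Section~4 structural theorem with this characterisation, together with a brief argument showing that any non-traceable nearly balanced bipartite graph with $e(G)\ge(n-1)^2$ other than $T^0_n$ must in fact satisfy $\rho(G)<n-1$.

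The main obstacle I anticipate is the sharpness of the spectral-to-edges step in part~(1): the inequality used must be tight enough that the edge lower bound it produces exactly triggers the Section~4 structural theorem, and the subsequent spectral comparisons with each remaining exceptional non-traceable graph have to be carried out uniformly in $k$ over the whole range $1\le k\le\sqrt{n}-1$. The boundary regime $k\sim\sqrt{n}$, where the Perron root of the $4\times 4$ quotient matrix of $S^k_n$ approaches the spectral radii of competing graphs, is the most delicate, and the hypothesis $n\ge(k+1)^2$ is precisely tuned to make these comparisons bite.
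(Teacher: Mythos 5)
There is a genuine gap, and it sits at the center of your plan: the ``Moon--Moser-type structural theorem for Hamilton paths in \emph{nearly balanced} bipartite graphs'' that you invoke is never established in the paper, and your proposal does not prove it either. Section~4 only contains path-type structural results for \emph{balanced} bipartite graphs (Lemma~\ref{LeClosed}, Lemma~\ref{LeSubgraph}, Lemma~\ref{LeTraceable}); the whole closure machinery (B-closure, Theorem~\ref{ThBoCh}) is formulated for balanced graphs and does not transfer directly to the odd-order case. The paper's actual proof of Theorem~\ref{ThrhoNBG} avoids needing such a theorem by a reduction: writing $\{X,Y\}$ with $|X|=n-1$, $|Y|=n$, add a new vertex $x'$ joined to all of $Y$ to obtain a balanced graph $G'$ on $2n$ vertices; then $G$ is traceable if and only if $G'$ is Hamiltonian. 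After the (coarse) edge bound $e(G)>n(n-k-1)$ from Theorem~\ref{ThBhFrPe} and Lemma~\ref{LeCompare}, one gets $e(G')>n(n-k-1)+(k+1)^2$ when $n\geq(k+1)^2$, and the known Hamilton-\emph{cycle} theorem (Theorem~\ref{ThLiNi4}) applies to $G'$, giving $G'$ Hamiltonian or $G'\subseteq B_n^k$; unpacking the position of $x'$ inside $B_n^k$ yields $G\subseteq S_n^k$ or $G\subseteq T_n^{k-1}$, the latter excluded by $\delta(G)\geq k$ and the former forced to equality by Perron--Frobenius monotonicity. Without this balancing trick (or an independently proved nearly-balanced path theorem, which is essentially the same work), your skeleton has nothing to hang on.

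A second, smaller problem: the spectral-to-edge conversion you propose is not achievable as stated. No inequality can turn $\rho(G)\geq\rho(S_n^k)$ into $e(G)\geq e(S_n^k)$: the only universal conversion for bipartite graphs is Nosal's $e(G)\geq\rho(G)^2$, and since $S_n^k$ is not a complete bipartite graph plus isolated vertices, $\rho(S_n^k)^2<e(S_n^k)=n(n-k-1)+k^2$ strictly; a complete bipartite graph $K_{a,b}$ with $ab$ just above $\rho(S_n^k)^2$ witnesses that the implication fails. Fortunately, the sharp bound is not needed -- the paper only extracts $e(G)>n(n-k-1)$, and the condition $n\geq(k+1)^2$ supplies the slack $n(n-k-1)\geq n(n-k-2)+(k+1)^2$ that makes the structural theorem fire -- but this repair again presupposes the reduction to the balanced Hamilton-cycle setting. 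Your worry about the regime $k\sim\sqrt{n}$ is misplaced for the same reason: the only comparisons needed are $\rho(S_n^k)>\sqrt{n(n-k-1)}$ (immediate from $K_{n,n-k-1}\varsubsetneq S_n^k$) and, for $k=0$, Lemma~\ref{LeQn1Sn1}(1); no delicate uniform-in-$k$ quotient-matrix analysis occurs.
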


\begin{theorem}\label{ThqNBG}
Let $G$ be a nearly balanced bipartite graph on $2n-1$ vertices, with
minimum degree $\delta(G)\geq k$, where $k\geq 0$ and $n\geq(k+1)^2$.\\
(1) If $k\geq 1$ and $q(G)\geq q(S^k_n)$, then $G$ is traceable
unless $G=S^k_n$.\\
(2) If $k=0$ and $q(G)\geq q(S^1_n)$, then $G$ is traceable unless
$G=S_n^1$.
\end{theorem}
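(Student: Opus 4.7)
The plan is to convert the signless Laplacian condition $q(G)\geq q(S_n^k)$ into an edge-count condition and then invoke the Moon--Moser-type structural result for Hamilton paths announced in Section 4. Concretely, I would first establish a lower bound on $q(S_n^k)$ by exhibiting a convenient test vector on the Perron eigenvector of the dense block $K_{n-k,n-k-1}$ inside $S_n^k$, giving roughly $q(S_n^k)\geq 2(n-k)+\varepsilon_{n,k}$, sharp up to the correction term that accounts for the $K_{k,n-k-1}$ and $\varPhi_{n-k,k}$ pieces glued via~$\sqcup$. Next, I would use one of the upper bounds on $q$ in terms of $e(G)$ for bipartite graphs with fixed partition sizes (of the type developed in Section~3) to conclude that $q(G)\geq q(S_n^k)$ forces $e(G)\geq n(n-k-1)+k^2$, which is exactly the Moon--Moser threshold for non-traceable nearly balanced bipartite graphs with $\delta\geq k$.

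Second, I would apply the structural Hamilton-path theorem from Section~4 (the nearly balanced analogue of Moon--Moser for paths). It should state that if $G$ is non-traceable, nearly balanced, has $\delta(G)\geq k$, and satisfies the above edge bound, then $G$ is a spanning subgraph of one of the canonical extremal graphs on $2n-1$ vertices, i.e., $G\in\mathcal{S}_n^k\cup\mathcal{T}_n^k\cup\{\varGamma_n^0,\emph{\L}\}$, possibly with a small exceptional list when $k$ is small. The assumption $n\geq(k+1)^2$ is what kills the sporadic exceptions and makes the list clean.

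Third, I would run a case analysis on the candidates. For $H\in\mathcal{S}_n^k$ with $H\neq S_n^k$, the monotonicity $q(H)<q(S_n^k)$ follows directly since $S_n^k$ is the edge-maximal element of $\mathcal{S}_n^k$ and the signless Laplacian is strictly monotone under edge addition on a fixed vertex set. For $H\in\mathcal{T}_n^k$ the comparison $q(H)\leq q(T_n^k)<q(S_n^k)$ should be established by a direct quotient-matrix computation: both $S_n^k$ and $T_n^k$ have natural equitable partitions into four blocks, so $q$ is the largest root of an explicit $4\times 4$ characteristic polynomial, and the inequality reduces to a polynomial inequality in $n$ and $k$ that holds once $n\geq(k+1)^2$. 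The exceptional graphs $\varGamma_n^0$ and $\emph{\L}$ are then ruled out by an ad hoc comparison. For part~(2), since there is no effective minimum-degree constraint when $k=0$, the graph $S_n^1$ plays the role of the unique extremal non-traceable graph, and the same argument applies with the minor modification that one must compare $q(S_n^1)$ against $q(T_n^0)$ and a few small graphs with a pendant vertex.

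The main obstacle I anticipate is the spectral comparison $q(T_n^k)<q(S_n^k)$: both graphs have essentially the same edge count, and the inequality depends on a delicate balance between the sizes of the two ``empty'' blocks $\varPhi_{n-k,k}$ and $\varPhi_{n-k-1,k+1}$. The Perron vector must be analysed carefully on the four-block equitable partition, and the whole argument is what forces the hypothesis $n\geq(k+1)^2$. Everything else, including the translation from $q$ to $e(G)$ and the appeal to the Section~4 structure theorem, should be routine given the tools already built in Sections~3 and~4.
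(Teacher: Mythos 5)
Your high-level skeleton (turn the $q$-condition into an edge count, then apply a structure theorem, then compare candidates spectrally) is the paper's skeleton too, but two of your three steps do not go through as you describe. The central gap is step two: the structural theorem you invoke --- a Hamilton-path Moon--Moser theorem for \emph{nearly balanced} bipartite graphs with exceptional family $\mathcal{S}_n^k\cup\mathcal{T}_n^k\cup\{\varGamma_n^0,\emph{\L}\}$ --- does not exist in the paper and is not routine. Section 4's Lemma~\ref{LeTraceable} concerns \emph{balanced} bipartite graphs on $2n$ vertices only. The paper's actual proof never works with a nearly balanced structure theorem: it adds a new vertex $x'$ joined to every vertex of the larger part $Y$, producing a balanced graph $G'$ on $2n$ vertices with $\delta(G')\geq\delta(G)\geq k$ and the key equivalence that $G$ is traceable if and only if $G'$ is Hamiltonian; it then applies the Hamilton-\emph{cycle} theorem (Theorem~\ref{ThLiNi4}) to $G'$, concluding $G'\subseteq B_n^k$, hence $G\subseteq S_n^k$ or $G\subseteq T_n^{k-1}$ (note: $T_n^{k-1}$, not $T_n^k$). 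Without this vertex-addition reduction, your plan requires proving your claimed structure theorem from scratch, which is essentially the whole difficulty of the problem.

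Two further points. First, your spectral lower bound is too weak: the test block $K_{n-k,n-k-1}$ gives only $q(S_n^k)\gtrsim 2n-2k$, and feeding that into $q(G)\leq e(G)/n+n$ (Theorem~\ref{ThLiNi5}, applied after padding $G$ with an isolated vertex) yields roughly $e(G)>n(n-2k)$, well below any usable threshold. The paper instead uses $K_{n,n-k-1}\varsubsetneq S_n^k$ (Lemma~\ref{LeCompare}) to get $q(S_n^k)>2n-k-1$, hence $e(G)>n(n-k-1)\geq n(n-k-2)+(k+1)^2$ exactly when $n\geq(k+1)^2$ --- which is also where the hypothesis $n\geq(k+1)^2$ is actually consumed. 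Second, the comparison $q(T_n^k)<q(S_n^k)$ that you flag as the main obstacle is a red herring: the $T$-type candidates that arise are spanning subgraphs of $T_n^{k-1}$, which have minimum degree at most $k-1$ and are therefore excluded by the hypothesis $\delta(G)\geq k$ with no spectral argument at all; in the $k=0$ case the paper only needs $q(T_n^0)=q(\varGamma_n^0)=2n-2<q(S_n^1)$, which again follows from $K_{n,n-2}\varsubsetneq S_n^1$, not from any quotient-matrix computation.
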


\begin{theorem}\label{ThrhoNBGC}
Let $G$ be a nearly balanced bipartite graph on $2n-1$ vertices, with
minimum degree $\delta(G)\geq k$, where $k\geq 0$ and $n\geq 2k+1$. \\
(1) If $k\geq 1$ and $\rho(\widehat{G})\leq\rho(\widehat{S_n^k})$,
then $G$ is traceable unless $G\in\mathcal{S}_n^k$.\\
(2) If $k=0$ and $\rho(\widehat{G})\leq\rho(\widehat{T^0_n})$, then
$G$ is traceable unless $G\in\mathcal{S}_n^1\cup\{T_n^0\}$.
\end{theorem}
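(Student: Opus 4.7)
\medskip
\noindent\textbf{Proof proposal.}
The plan is to combine the Moon--Moser-type structural theorem for Hamilton paths in nearly balanced bipartite graphs (to be established in Section~4) with the monotonicity of the Perron spectral radius under subgraph inclusion. Suppose for contradiction that $G$ is non-traceable. A direct inspection shows that $\widehat{S_n^k}$ is the disjoint union of a $K_{k,n-k}$ with isolated vertices, giving $\rho(\widehat{S_n^k})=\sqrt{k(n-k)}$; similarly $\widehat{T_n^0}\cong K_{1,n-1}$ plus isolated vertices, so $\rho(\widehat{T_n^0})=\sqrt{n-1}=\rho(\widehat{S_n^1})$. This coincidence is precisely what makes both $S_n^1$ and $T_n^0$ appear among the exceptions in~(2).

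Next, the spectral hypothesis needs to be converted to a structural condition. Evaluating the Rayleigh quotient of $\widehat{G}$ on the normalized bipartite all-ones vector gives $\rho(\widehat{G})\geq e(\widehat{G})/\sqrt{n(n-1)}$; together with the hypothesis $\rho(\widehat{G})\leq\rho(\widehat{S_n^k})$ (or $\rho(\widehat{T_n^0})$ in part~(2)) and the auxiliary spectral comparisons collected in Section~3, this forces $e(G)$ above the Moon--Moser edge threshold for Hamilton paths. Applying the Hamilton-path analogue of Moon--Moser for nearly balanced bipartite graphs (Section~4), any non-traceable $G$ with $\delta(G)\geq k$ meeting this bound must be a spanning subgraph of one of a finite list of extremal graphs $G^{*}$, the principal members being $S_n^k$ for $k\geq 1$ and $S_n^1$ or $T_n^0$ when $k=0$.

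Fix such a $G^{*}$ with $G\subseteq G^{*}$. Then $\widehat{G}\supseteq\widehat{G^{*}}$, and monotonicity of the Perron spectral radius yields $\rho(\widehat{G^{*}})\leq\rho(\widehat{G})\leq\rho(\widehat{S_n^k})$. Using the spectral inequalities of Section~3, I rule out every candidate $G^{*}$ other than $S_n^k$ (respectively $T_n^0$ in part~(2)) by showing $\rho(\widehat{G^{*}})$ strictly exceeds $\sqrt{k(n-k)}$ for each remaining candidate. Once $G^{*}$ is pinned down, the inequality chain collapses to equalities, and the equality remark recorded in Section~2---that $\rho(\widehat{G})=\rho(\widehat{S_n^k})$ holds for a spanning subgraph $G$ of $S_n^k$ iff $G\in\mathcal{S}_n^k$---places $G$ in the asserted exceptional family.

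The main obstacle is the Moon--Moser analogue itself: the classical theorem treats Hamilton cycles in balanced bipartite graphs, and lifting it to Hamilton paths in the \emph{nearly} balanced setting requires both the Bondy--Chv\'atal closure machinery and the ``large complete bipartite subgraph in a dense bipartite graph'' lemma promised in Section~4; the passage from balanced to nearly balanced is typically effected by adjoining a universal vertex to the smaller part and translating Hamiltonicity of the resulting balanced graph back to traceability of $G$. A secondary but genuinely technical chore is the case-by-case spectral comparison of $\rho(\widehat{G^{*}})$ against $\sqrt{k(n-k)}$ for the various competing extremal graphs produced by the structural theorem, especially when $k=0$ where the two extremal graphs $S_n^1$ and $T_n^0$ must be handled simultaneously.
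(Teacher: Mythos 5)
Your overall frame (add a universal vertex to the small side so that traceability of $G$ becomes Hamiltonicity of a balanced graph $G'$) is the right one, but the engine you propose to drive it --- converting the quasi-complement spectral hypothesis into an edge count and then invoking a Moon--Moser-type edge theorem --- breaks down, and this is a genuine gap, not a fixable technicality. From $\rho(\widehat{G})\le\sqrt{k(n-k)}$ your Rayleigh-quotient bound only yields $e(\widehat{G})\le\sqrt{k(n-k)}\cdot\sqrt{n(n-1)}\approx \sqrt{k}\,n^{3/2}$, i.e.\ $G$ may be missing on the order of $\sqrt{k}\,n^{3/2}$ edges, whereas the edge-threshold lemmas (Theorem~\ref{ThLiNi4}, Lemma~\ref{LeTraceable}) tolerate a deficit of only about $(k+1)n$ edges. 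The loss is real: a quasi-complement consisting of many vertex-disjoint copies of $K_{a,b}$ with $ab\le k(n-k)$ satisfies the spectral hypothesis while having $\Theta(\sqrt{k}\,n^{3/2})$ edges, so the Moon--Moser threshold is simply not forced. (This is also visible in the hypotheses: the edge-based results require $n\ge(k+1)^2$ or $(k+2)^2$, while Theorem~\ref{ThrhoNBGC} claims the conclusion already for $n\ge 2k+1$; no edge-counting route can close that gap.) A second flaw is your final elimination step: you cannot rule out the competing extremal family by showing its quasi-complement spectral radius strictly exceeds $\sqrt{k(n-k)}$, because for $G\in\mathcal{T}_n^{k-1}$ one has $\widehat{T_n^{k-1}}=\varPhi_{k-1,n-k}\cup K_{n-k,k}$, so $\rho(\widehat{T_n^{k-1}})=\sqrt{k(n-k)}$ exactly; these graphs satisfy the spectral hypothesis with equality and are excluded only because they violate $\delta(G)\ge k$.

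The paper's proof avoids edges entirely. It observes that the added vertex $x'$ is isolated in $\widehat{G'}$, hence $\rho(\widehat{G'})=\rho(\widehat{G})\le\rho(\widehat{S_n^k})=\rho(\widehat{B_n^k})=\sqrt{k(n-k)}$, and then applies the already-proved \emph{spectral} Hamiltonicity theorem for balanced bipartite graphs under a quasi-complement condition (Theorem~\ref{ThLiNi}(3), valid for $n\ge 2k$), which says $G'$ is Hamiltonian unless $G'\in\mathcal{B}_n^k$ or $(k,n)=(2,4)$ with $G'=L_1,L_2$. Translating back, $G\in\mathcal{S}_n^k\cup\mathcal{T}_n^{k-1}$ or $G=\emph{\L}$, and the latter possibilities are killed by the degree condition $\delta(G)\ge k$; the case $k=0$ is then reduced to $k=1$ plus a separate analysis of graphs with an isolated vertex. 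If you want to salvage your write-up, replace the Rayleigh-quotient/edge-threshold step by a direct appeal to Theorem~\ref{ThLiNi}(3) applied to $G'$, and replace the spectral elimination of candidates by the degree argument.
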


\begin{theorem}\label{ThqNBGC}
Let $G$ be a nearly balanced bipartite graph on $2n-1$ vertices. If
$q(\widehat{G})\leq n$, then $G$ is traceable unless
$G\in(\bigcup_{k=1}^{\lfloor
(n-1)/2\rfloor}\mathcal{S}_n^k)\cup(\bigcup_{k=0}^{\lfloor
n/2\rfloor-1}\mathcal{T}_n^k)$, or $n=4$ and $G={\L}$.
\end{theorem}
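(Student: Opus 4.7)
The plan is a three-step spectral-to-structure argument: convert the hypothesis $q(\widehat{G})\le n$ into an edge-count bound on $G$; invoke the Hamilton-path analogue of Moon-Moser's theorem for nearly balanced bipartite graphs developed in Section~4 to place any non-traceable $G$ meeting this bound as a spanning subgraph of some $S_n^k$ or $T_n^k$; and then apply the paper's own spectral remark on these families, together with the monotonicity of $q$ under edge addition, to identify exactly which such spanning subgraphs meet the equality hypothesis.

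For the first step, a standard Rayleigh-quotient estimate (using the test vector that takes value $b$ on the $a$-part and $a$ on the $b$-part) gives, for any bipartite graph $H$ with parts of sizes $a,b$,
\[
q(H)\ \ge\ \frac{(a+b)\,e(H)}{ab}.
\]
Applied to $\widehat{G}$ with $(a,b)=(n-1,n)$, the hypothesis yields $e(\widehat{G})\le n^2(n-1)/(2n-1)$, and therefore $e(G)\ge n(n-1)^2/(2n-1)$, a bound of order $n^2/2$ that, for $n$ past a small constant, puts $G$ above the Hamilton-path edge threshold of Section~4.

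For the second step, the structural Hamilton-path theorem from Section~4 should guarantee that a nearly balanced bipartite graph with this many edges is either traceable, or a spanning subgraph of $S_n^k$ or $T_n^k$ for some admissible $k$, or one of a finite list of small-order exceptions; one such exception at $n=4$ is the graph $\L$. For the third step, the explicit remark in Section~2 gives that for any spanning subgraph $G$ of $S_n^k$, $q(\widehat{G})=q(\widehat{S_n^k})$ iff $G\in\mathcal{S}_n^k$; combined with the monotonicity $q(\widehat{G})\ge q(\widehat{S_n^k})=n$ (since $\widehat{G}\supseteq\widehat{S_n^k}$) and the hypothesis $q(\widehat{G})\le n$, this forces $G\in\mathcal{S}_n^k$, and analogously $G\in\mathcal{T}_n^k$ in the $T$-case. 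The principal obstacle I expect is the second step: the structural theorem needs to be tight at the derived edge count and must describe all non-traceable extremal graphs completely, including the $n=4$ sporadic case producing $\L$; once that classification is in hand, steps one and three are short spectral manipulations.
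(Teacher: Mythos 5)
Your Step 1 is correct as far as it goes (the Rayleigh-quotient bound and the resulting estimate $e(G)\geq n(n-1)^2/(2n-1)$ both check out), but reducing the hypothesis to an edge count is exactly where the argument collapses. Step 2 asks for a structural theorem that neither exists in the paper nor can exist. The paper's only Hamilton-path structural result, Lemma \ref{LeTraceable}, concerns \emph{balanced} bipartite graphs, requires a minimum-degree hypothesis $\delta(G)\geq k\geq 1$, needs the much stronger bound $e(G)>n(n-k-2)+(k+2)^2$ (of order $n^2-kn$, not $n^2/2$), and concludes containment in $Q_n^k$ or $R_n^1$, not in $S_n^k$ or $T_n^k$. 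More fatally, no purely edge-based classification at your threshold can be true. Take $X=\{x_1,\dots,x_{n-1}\}$, $Y=\{y_1,\dots,y_n\}$, and let $G$ consist of the three edges $y_ix_i$ ($i=1,2,3$) together with all edges between $\{y_4,\dots,y_n\}$ and $X$. Then $G$ is non-traceable (a Hamilton path on $2n-1$ vertices has both ends in $Y$, so at most two vertices of $Y$ can have degree one, but here three do), and $e(G)=(n-3)(n-1)+3=n^2-4n+6$, far above your threshold for large $n$. Yet $G$ lies in no $S_n^k$ and no $T_n^k$: containment in $S_n^k$ forces $\widehat{G}$ to contain $K_{n-k,k}$ with $n-k$ vertices of positive degree in $Y$, while only $y_1,y_2,y_3$ have positive degree in $\widehat{G}$, so $k\geq n-3$, outside the admissible range $k\leq(n-1)/2$; containment in $T_n^k$ requires $k+1\leq 3$ vertices of $Y$ having at least $n-k-1$ common non-neighbors in $G$, whereas any $k+1$ of $y_1,y_2,y_3$ have exactly $n-1-(k+1)=n-k-2$ of them. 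This $G$ does not contradict the theorem itself, since one can check (e.g.\ via the equitable-partition quotient of $\widehat{G}$) that $q(\widehat{G})>n$; what it shows is that the hypothesis $q(\widehat{G})\leq n$ is genuinely stronger than the edge bound it implies, and discarding it in Step 1 destroys the information the classification needs. Your Step 3 would be fine if Step 2 were available, but it is not.

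For the record, the paper's proof never counts edges for this theorem. It adds a new vertex $x'$ to the small side of $G$, joined to every vertex of $Y$, producing a balanced bipartite graph $G'$ such that $G$ is traceable iff $G'$ is Hamiltonian, and such that $x'$ is isolated in $\widehat{G'}$, whence $q(\widehat{G'})=q(\widehat{G})\leq n$. Theorem \ref{ThLiNi'} (a Hamilton-cycle result from the authors' earlier work) then gives that $G'$ is Hamiltonian unless $G'\in\bigcup_{k}\mathcal{B}_n^k$, or $n=4$ and $G'\in\{L_1,L_2\}$; deleting $x'$ from these exceptional graphs yields precisely the families $\mathcal{S}_n^k$, $\mathcal{T}_n^k$ and the graph {\L}. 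This route needs no minimum-degree hypothesis, no structural lemma beyond the cited Hamiltonicity theorem, and no lower bound on $n$ --- all three of which your argument would require.
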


\section{Spectral inequalities}

We will use the following spectral inequalities for graphs and
bipartite graphs, respectively. The first theorem is a direct
corollary of a result of Nosal \cite{No}. (See also \cite{BFP}.)

\begin{theorem}[Nosal \cite{No}, Bhattacharya, Friedland and Peled \cite{BFP}]\label{ThBhFrPe}
Let $G$ be a bipartite graph. Then
$$\rho(G)\leq \sqrt{e(G)}.$$
\end{theorem}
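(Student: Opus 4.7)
The plan is to use two standard facts about the spectrum of the adjacency matrix $A=A(G)$: first, the trace identity $\operatorname{tr}(A^2)=\sum_i \lambda_i^2 = 2e(G)$, which follows from counting closed walks of length $2$; second, the symmetry of the spectrum of a bipartite graph about $0$, i.e., if $\lambda$ is an eigenvalue of $A$ with multiplicity $m$, then so is $-\lambda$.

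Concretely, I would let $\lambda_1 \geq \lambda_2 \geq \cdots \geq \lambda_n$ be the eigenvalues of $A$, so that $\rho(G)=\lambda_1$. The bipartite structure yields a sign-flip involution on the eigenvalues (pair eigenvectors with the $\pm 1$-diagonal on the bipartition), so in particular $-\rho(G)=\lambda_n$ is also an eigenvalue. I would then write
\[
2e(G)\;=\;\sum_{i=1}^{n}\lambda_i^2\;\geq\;\lambda_1^2+\lambda_n^2\;=\;2\rho(G)^2,
\]
and the desired inequality $\rho(G)\leq \sqrt{e(G)}$ follows by dividing by $2$ and taking square roots.

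There is essentially no obstacle here: the only things to verify carefully are the two cited identities. The trace identity is immediate from $(A^2)_{ii}=\deg(v_i)$ and $\sum_i \deg(v_i) = 2e(G)$. The bipartite symmetry follows by considering the matrix $D=\mathrm{diag}(\varepsilon_1,\dots,\varepsilon_n)$ with $\varepsilon_i=+1$ on one part and $-1$ on the other: then $DAD=-A$, so $A$ and $-A$ are similar and hence share the same spectrum. No extra assumption (connectivity, regularity, etc.) is needed, and the bound is sharp, attained for instance by complete bipartite graphs $K_{a,b}$ where $\rho=\sqrt{ab}=\sqrt{e(K_{a,b})}$.
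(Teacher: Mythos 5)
Your proof is correct. Note, however, that the paper does not actually prove this statement: it appears as a quoted theorem with references to Nosal's thesis and to Bhattacharya--Friedland--Peled, and the authors remark only that it is ``a direct corollary of a result of Nosal'', namely Nosal's theorem that every \emph{triangle-free} graph $G$ satisfies $\rho(G)\leq\sqrt{e(G)}$ (bipartite graphs being triangle-free). So your argument is genuinely different from the route the paper gestures at: instead of passing through the triangle-free case, you exploit bipartiteness directly, combining the trace identity $\sum_i\lambda_i^2=2e(G)$ with the sign-flip symmetry of the spectrum (via $DAD=-A$ for the $\pm1$ signing of the bipartition) to get $2e(G)\geq\lambda_1^2+\lambda_n^2=2\rho(G)^2$. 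What your approach buys is a short, completely self-contained proof, together with the sharpness example $K_{a,b}$; what Nosal's route buys is generality, since triangle-freeness is strictly weaker than bipartiteness. One small point you may wish to spell out: from the facts that $-\lambda_1$ is an eigenvalue and $|\lambda_n|\leq\rho(G)=\lambda_1$, you conclude $\lambda_n=-\lambda_1$; and when $n\geq 2$ the indices $1$ and $n$ are distinct, so dropping the remaining (nonnegative) terms in $\sum_i\lambda_i^2\geq\lambda_1^2+\lambda_n^2$ is legitimate, the case $n=1$ being trivial since then $e(G)=0$ and $\rho(G)=0$.
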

The next theorem has been proved in \cite{LN}, with the help
of a result due to Feng and Yu \cite[Lemma~2.4]{FY}, which
can be traced back to Merris \cite{M}.
\begin{theorem}[Li and Ning \cite{LN}]\label{ThLiNi5}
Let $G$ be a balanced bipartite graph on $2n$ vertices. Then
$$q(G)\leq \frac{e(G)}{n}+n.$$
\end{theorem}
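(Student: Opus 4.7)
The plan is to deduce the bound from the Merris-type upper estimate for the signless Laplacian spectral radius stated as Feng and Yu's Lemma~2.4: for every graph $H$ with at least one edge,
$$q(H)\;\le\;\max_{u:\,d(u)\ge 1}\bigl(d(u)+m(u)\bigr), \qquad m(u):=\frac{1}{d(u)}\sum_{v\in N(u)}d(v),$$
where $m(u)$ is the average degree of the neighbors of $u$. The goal then reduces to showing that every vertex $u$ with $d(u)\ge 1$ in our balanced bipartite graph $G$ satisfies $d(u)+m(u)\le n+e(G)/n$. The case $e(G)=0$ is immediate (both sides are $n$), so we may assume $e(G)\ge 1$; isolated vertices contribute only $0$-eigenvalues and are irrelevant.

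Fix a vertex $u$ with $d(u)\ge 1$ in the part $X$ of size $n$ (the case $u\in Y$ is symmetric). The balanced bipartite structure yields two elementary bounds on $m(u)$. First, every neighbor $v$ of $u$ lies in $Y$, so $d(v)\le|X|=n$, whence $m(u)\le n$. Second, summing over the neighbors of $u$ counts only a subset of the edges incident to $Y$, so
$$d(u)\,m(u)\;=\;\sum_{v\in N(u)}d(v)\;\le\;\sum_{v\in Y}d(v)\;=\;e(G),$$
giving $m(u)\le e(G)/d(u)$.

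The desired inequality $d(u)+m(u)\le n+e(G)/n$ now follows by a case split on whether $d(u)$ is smaller or larger than $e(G)/n$. If $d(u)\le e(G)/n$, the first bound gives $d(u)+m(u)\le d(u)+n\le e(G)/n+n$. If $d(u)\ge e(G)/n$, the second bound gives $d(u)+m(u)\le d(u)+e(G)/d(u)$; the function $t\mapsto t+e(G)/t$ is convex with minimum at $t=\sqrt{e(G)}$, and this minimizer lies in the interval $[e(G)/n,n]$ because $e(G)\le n^2$. Consequently the maximum on that interval is attained at an endpoint, and a direct check shows both endpoints evaluate to $n+e(G)/n$. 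In either case $d(u)+m(u)\le n+e(G)/n$, and feeding this back into the Feng--Yu bound yields $q(G)\le n+e(G)/n$.

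I expect the only non-routine step to be the second case, where one must combine the convexity of $t+e(G)/t$ with the \emph{a priori} range $d(u)\in[e(G)/n,n]$ forced by the balanced bipartition; without the upper bound $d(u)\le n$ the estimate $d(u)+e(G)/d(u)$ would be useless for large $d(u)$. Everything else is a direct application of the Feng--Yu lemma combined with the two elementary inequalities on $m(u)$ afforded by the bipartite structure.
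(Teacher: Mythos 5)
Your proof is correct and takes essentially the same approach as the paper's: Theorem~\ref{ThLiNi5} is quoted from \cite{LN}, where it is proved precisely via the Feng--Yu Lemma~2.4 (the Merris-type bound $q(G)\le\max_u\bigl(d(u)+m(u)\bigr)$) combined with the bipartite estimates $m(u)\le n$ and $d(u)\,m(u)\le e(G)$. Two cosmetic remarks: in your $e(G)=0$ case the left-hand side is $0$ rather than $n$ (the inequality still holds trivially), and your convexity/endpoint case split can be compressed to the one-line observation that $(n-d(u))(n-m(u))\ge 0$ gives $n\bigl(d(u)+m(u)\bigr)\le n^2+d(u)\,m(u)\le n^2+e(G)$.
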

The following two theorems can be proved similarly as Lemma 2.1 in
\cite{BZ} and Theorem 2 in \cite{AM}, respectively. We omit the proofs.

\begin{theorem}\label{ThMinrhoG}
Let $G$ be a graph with non-empty edge set. Then
$$\rho(G)\geq\min\{\sqrt{d(u)d(v)}: uv\in E(G)\}.$$
Moreover, if $G$ is connected, then equality holds if and only if
$G$ is regular or semi-regular bipartite.
\end{theorem}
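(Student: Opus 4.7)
The plan is to apply the Rayleigh principle with the positive test vector $f : V(G) \to \mathbb{R}$ defined by $f(v) = \sqrt{d(v)}$, and then read off the equality case by combining Perron--Frobenius with the arithmetic-mean-versus-minimum inequality. Since $E(G) \neq \emptyset$, $f$ is nonzero and nonnegative, so the Rayleigh quotient $f^{\top}A(G)f / (f^{\top}f)$ is a genuine lower bound on $\rho(G)$.

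For the inequality itself, one computes directly that $f^{\top}A(G)f = 2\sum_{uv \in E(G)} \sqrt{d(u)d(v)}$, since each edge $uv$ contributes the symmetric pair $f(u)f(v) + f(v)f(u)$, while $f^{\top}f = \sum_{v \in V(G)} d(v) = 2e(G)$. Therefore
\[
\rho(G) \;\geq\; \frac{f^{\top}A(G)f}{f^{\top}f} \;=\; \frac{\sum_{uv \in E(G)} \sqrt{d(u)d(v)}}{e(G)} \;\geq\; \min_{uv \in E(G)} \sqrt{d(u)d(v)},
\]
the last step being the trivial bound of an arithmetic mean by its minimum entry. This settles the inequality.

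For the characterization of equality, assume $G$ is connected. Equality in the full chain forces simultaneous equality in the Rayleigh bound and in the AM-versus-min bound. Equality in Rayleigh together with Perron--Frobenius (applied to the irreducible nonnegative matrix $A(G)$) forces $f$ to be a positive scalar multiple of the Perron eigenvector, whence $A(G) f = \rho(G) f$. Equality in AM-versus-min forces $d(u)d(v)$ to take a single common value $c^{2}$ on every edge. Consequently, for any vertex $u$ with $d(u) = a$, every neighbor of $u$ has the same degree $b := c^{2}/a$; iterating along paths shows that every vertex at even (resp.\ odd) distance from $u$ has degree $a$ (resp.\ $b$). By connectedness, either $a = b$, in which case $G$ is regular, or $a \neq b$, in which case the partition of $V(G)$ by parity of distance from $u$ exhibits $G$ as semi-regular bipartite with part-degrees $a$ and $b$. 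Conversely, if $G$ is regular of degree $d$ or semi-regular bipartite with part-degrees $d_1, d_2$, a direct check gives $A(G)f = \sqrt{d_1 d_2}\, f$ (with $d_1 = d_2$ in the regular case), so equality is attained.

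The only delicate point is the equality direction, where one must ensure that the Rayleigh equality really pins $f$ down as a scalar multiple of the Perron eigenvector: this uses the strict positivity of $f$ (which holds because connectedness together with $E(G) \neq \emptyset$ forces every vertex to have positive degree) and the simplicity of the Perron eigenvalue of the irreducible matrix $A(G)$. Once the two equality conditions are extracted separately, propagating the ``alternating degrees $a,b$'' structure across $G$ is immediate from connectedness, and the remaining verifications are routine.
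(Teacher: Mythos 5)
Your proof is correct, but there is essentially nothing in the paper to compare it against: the authors explicitly omit the proof of this theorem, saying only that it ``can be proved similarly as Lemma 2.1 in \cite{BZ}'' (Berman--Zhang). So your argument supplies a complete proof where the paper defers to a citation. Your route --- the Rayleigh quotient with test vector $f(v)=\sqrt{d(v)}$, giving
\[
\rho(G)\;\geq\;\frac{f^{\top}A(G)f}{f^{\top}f}\;=\;\frac{1}{e(G)}\sum_{uv\in E(G)}\sqrt{d(u)d(v)}\;\geq\;\min_{uv\in E(G)}\sqrt{d(u)d(v)}
\]
--- is a clean, self-contained alternative to the row-sum (Collatz--Wielandt) style arguments usually used for bounds of this type (e.g.\ bounding $\rho(A^2)$ below by its minimum row sum $\min_u\sum_{v\sim u}d(v)\geq\min_{uv\in E}d(u)d(v)$ on a connected graph). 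Two remarks on your equality analysis, neither of which is a gap. First, it is mildly redundant: once the average-equals-minimum condition forces $d(u)d(v)$ to be constant on edges, your parity-of-distance propagation plus connectedness already yields the regular/semi-regular-bipartite dichotomy (an edge inside a part would force $a^2=ab$, i.e.\ $a=b$), so the Perron-eigenvector condition $A(G)f=\rho(G)f$ is not actually needed in the forward direction. Second, in the converse direction the step from $A(G)f=\sqrt{d_1d_2}\,f$ to ``equality is attained'' does require the Perron--Frobenius fact that a positive eigenvector of the adjacency matrix of a connected graph must belong to the spectral radius, so that $\rho(G)=\sqrt{d_1d_2}=\min_{uv\in E(G)}\sqrt{d(u)d(v)}$; you have that machinery in place, so the argument is complete. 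A further small advantage of your approach over the row-sum route is that it handles disconnected graphs and isolated vertices (which the inequality part of the statement allows) with no extra case analysis, since such vertices simply receive zero weight.
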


\begin{theorem}\label{ThMinqG}
Let $G$ be a graph with non-empty edge set. Then
$$q(G)\geq\min\{d(u)+d(v): uv\in E(G)\}.$$
Moreover, if $G$ is connected, then the equality holds if and only if
$G$ is regular or semi-regular bipartite.
\end{theorem}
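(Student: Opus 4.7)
The plan is to reduce the claim to a standard lower bound on the adjacency spectral radius, applied to the line graph $L(G)$. Let $B$ denote the (unsigned) vertex-edge incidence matrix of $G$. A direct computation gives $BB^{T}=Q(G)$ and $B^{T}B = 2I+A(L(G))$, and since $BB^{T}$ and $B^{T}B$ share the same nonzero spectrum, this yields the identity
$$q(G) = 2 + \rho(L(G)).$$

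Once this identification is in place, the inequality is almost immediate. Each edge $e=uv\in E(G)$ has degree $d(u)+d(v)-2$ as a vertex of $L(G)$, so $\delta(L(G)) = \min\{d(u)+d(v):uv\in E(G)\} - 2$. The elementary bound $\rho(H)\geq \delta(H)$, which follows by evaluating the Rayleigh quotient at the all-ones vector (giving $\rho(H)\geq 2|E(H)|/|V(H)|\geq\delta(H)$), applied to $H=L(G)$ then yields
$$q(G) = 2 + \rho(L(G)) \geq 2 + \delta(L(G)) = \min\{d(u)+d(v):uv\in E(G)\}.$$
If $G$ is disconnected one may first pass to the component achieving $q(G)$, since the edge-minimum taken over this component is at least the global one.

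For the equality case, assume $G$ is connected, so that $L(G)$ is also connected. Equality $\rho(H)=\delta(H)$ for a connected graph $H$ forces the all-ones vector to be a Perron eigenvector, hence $H$ must be regular. Thus $L(G)$ is regular, meaning $d(u)+d(v)$ is constant along every edge $uv \in E(G)$. The final step is the familiar combinatorial dichotomy: a connected graph in which $d(u)+d(v)$ is constant on edges is either regular, or bipartite with constant degree within each class (i.e., semi-regular bipartite). If $G$ is not bipartite it contains an odd cycle, and propagating the relation $d(v_{i-1})=d(v_{i+1})$ around such a cycle collapses all cycle-degrees to a single value, which then spreads throughout $G$ by connectivity. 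The main obstacle is not the inequality itself --- that is essentially a one-line consequence of the incidence-matrix identity --- but rather the equality analysis: one must carefully track connectedness of $L(G)$, invoke the equality case of $\rho(H)=\delta(H)$, and then execute the odd-cycle/bipartition argument to translate the regularity of $L(G)$ back into the stated structural description of $G$.
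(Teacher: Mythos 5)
Your approach is exactly the one the paper intends: the paper gives no proof of this theorem at all, saying only that it ``can be proved similarly as \dots\ Theorem 2 in \cite{AM}'' (Anderson--Morley), and the Anderson--Morley argument is precisely your incidence-matrix computation $BB^{T}=Q(G)$, $B^{T}B=2I+A(L(G))$, equality of the nonzero spectra, followed by a degree bound on the line graph. Your inequality part is correct (in fact the detour through components is unnecessary, since both the identity $q(G)=2+\rho(L(G))$ and the bound $\rho(H)\geq 2|E(H)|/|V(H)|\geq\delta(H)$ hold without any connectivity assumption), and your derivation that equality forces $L(G)$ to be regular, hence $d(u)+d(v)$ constant on edges, hence $G$ regular or semi-regular bipartite via the odd-cycle/bipartition propagation, is sound.

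The one genuine gap is that you prove only half of the stated equivalence. The theorem asserts that equality holds \emph{if and only if} $G$ is regular or semi-regular bipartite, and you establish only the forward implication (equality $\Rightarrow$ structure); you never verify that a connected regular or semi-regular bipartite graph actually attains equality. This missing half is immediate inside your own framework: if $G$ is regular of degree $d$, or semi-regular bipartite with part-degrees $r$ and $s$, then $L(G)$ is regular of degree $2d-2$ (respectively $r+s-2$), and the spectral radius of a regular graph equals its degree, so
$$q(G)=2+\rho(L(G))=2d \quad\text{(respectively } r+s\text{)},$$
which equals $\min\{d(u)+d(v): uv\in E(G)\}$. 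But as written the ``if'' direction is simply absent, so the proof of the characterization is incomplete until you add this (one-line) verification.
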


\begin{lemma}\label{LeCompare} For $k\geq 1$, $n\geq 2k+1$, we have
\begin{align*}
& \rho(Q_n^k)>\rho(K_{n,n-k-1})=\sqrt{n(n-k-1)}; && \rho(S_n^k)>\rho(K_{n,n-k-1})=\sqrt{n(n-k-1)};\\
& q(Q_n^k)>q(K_{n,n-k-1})=2n-k-1;               && q(S_n^k)>q(K_{n,n-k-1})=2n-k-1;\\
& \rho(\widehat{R_n^k})=\rho(K_{k,n-k})=\sqrt{k(n-k)};   && \rho(\widehat{S_n^k})=\rho(K_{n-k,k})=\sqrt{k(n-k)};\\
& q(\widehat{R_n^k})=q(K_{k,n-k})=n;           &&
q(\widehat{S_n^k})=q(K_{n-k,k})=q(\widehat{T_n^k})=q(K_{n-k-1,k+1})=n.
\end{align*}
\end{lemma}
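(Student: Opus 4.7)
The plan is to split the eight assertions into three groups: (i) the four strict inequalities that bound $\rho$ or $q$ of $Q_n^k$ or $S_n^k$ below by the corresponding quantity for $K_{n,n-k-1}$; (ii) the identifications of the quasi-complements $\widehat{R_n^k}$, $\widehat{S_n^k}$, $\widehat{T_n^k}$ as disjoint unions of complete bipartite graphs with isolated vertices; and (iii) the numerical evaluations of $\rho$ and $q$ on the complete bipartite graphs that appear.

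For group (i), I would unfold the $\sqcup$-definition and exhibit $K_{n,n-k-1}$ as a proper induced subgraph. Writing $Q_n^k = K_{k,n-k-1} \sqcup \varPhi_{n-k,k+1}$ with parts $X_1, Y_1, X_2, Y_2$ of sizes $k, n-k-1, n-k, k+1$ respectively, every vertex of $X := X_1 \cup X_2$ is adjacent to every vertex of $Y_1$, so the subgraph of $Q_n^k$ induced on $X \cup Y_1$ is exactly $K_{n,n-k-1}$. The $\sqcup$-operation also joins $X_1$ to $Y_2$, so $Q_n^k$ is connected, and since $Y_2 \neq \emptyset$, this copy of $K_{n,n-k-1}$ is a proper subgraph. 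The standard strict-monotonicity results — for a connected graph $G$ properly containing a connected subgraph $H$, one has $\rho(H) < \rho(G)$ and $q(H) < q(G)$, proved by plugging the Perron vector of $H$ (extended by zeros) into the relevant Rayleigh quotient and invoking irreducibility of the adjacency or signless Laplacian matrix of $G$ — then give $\rho(Q_n^k) > \rho(K_{n,n-k-1})$ and $q(Q_n^k) > q(K_{n,n-k-1})$. The same argument applied to $S_n^k = K_{k,n-k-1} \sqcup \varPhi_{n-k,k}$, which is likewise connected and has $K_{n,n-k-1}$ as a proper induced subgraph on $X \cup Y_1$, handles the remaining two strict inequalities.

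For group (ii), I would compute each quasi-complement directly. In $R_n^k = K_{k,k} \cup K_{n-k,n-k}$, labelling the four parts in the obvious way shows that the only bipartite non-edges are those between $A_1$ and $B_2$ and between $A_2$ and $B_1$, so $\widehat{R_n^k}$ is the disjoint union of $K_{k,n-k}$ and $K_{n-k,k}$. In $S_n^k$ the only bipartite non-edges lie in $X_2 \times Y_2$, so $\widehat{S_n^k}$ is a copy of $K_{n-k,k}$ together with $n-1$ isolated vertices; analogously $\widehat{T_n^k}$ is a copy of $K_{n-k-1,k+1}$ plus isolated vertices. Since $\rho$ and $q$ of a disjoint union equal the maximum over connected components and isolated vertices contribute $0$, each assertion in group (ii) reduces to an assertion in group (iii).

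Group (iii) consists of the identities $\rho(K_{a,b}) = \sqrt{ab}$ and $q(K_{a,b}) = a+b$, which follow from a short eigenvector calculation using the ansatz that is constant on each part. There is no essential obstacle in the whole lemma: it reduces to bookkeeping about the $\sqcup$-operation and the quasi-complement, together with textbook spectral facts. The only points that warrant care are verifying that $Q_n^k$ and $S_n^k$ are connected in the regime $k \geq 1$, $n \geq 2k+1$ (we have $|Y_1| = n-k-1 \geq k \geq 1$ and $|Y_2| \geq 1$, so connectedness is immediate) and tracking which side of the bipartition each factor of a $\sqcup$-product occupies, since both $\sqcup$ and $\widehat{\cdot}$ depend on an ordered choice of parts.
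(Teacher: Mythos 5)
Your proposal is correct and follows essentially the same route as the paper: the four strict inequalities come from exhibiting $K_{n,n-k-1}$ as a proper subgraph of the connected graphs $Q_n^k$ and $S_n^k$ and invoking Perron--Frobenius strict monotonicity, while the remaining equalities are direct computations of the quasi-complements (disjoint unions of complete bipartite graphs plus isolated vertices) and the textbook values $\rho(K_{a,b})=\sqrt{ab}$, $q(K_{a,b})=a+b$. The paper compresses all of this into two sentences; your version merely supplies the bookkeeping (connectedness of $Q_n^k$, $S_n^k$ and the identification of the induced copy of $K_{n,n-k-1}$) that the paper leaves implicit.
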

\begin{proof}
Since $K_{n,n-k-1}$ is a proper subgraph of $Q_n^k$ or $S_n^k$,
the first four inequalities follow from the Perron-Frobenius Theorem.
The others can be checked easily.
\end{proof}

\begin{lemma}\label{LeQn1Sn1}
(1) For $n\geq 3$,
$\rho(S_n^1)<\rho(Q_n^1)\leq\rho(R_n^1)=\rho(T_n^0)=n-1$, where the second
inequality becomes equality only if $n=3$.\\
(2)~For $n\geq 3$,
$2n-1=q(Q_n^0)>q(Q_n^1)>q(R_n^1)=2n-2$.
\end{lemma}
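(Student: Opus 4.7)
The plan is to verify each of the (in)equalities separately, combining direct eigenvalue computations for complete bipartite graphs (and their disjoint unions with isolated vertices) with Perron--Frobenius-type arguments and the edge bound from Theorem \ref{ThLiNi5}. The right-hand equalities in both parts reduce instantly to $\rho(K_{m,n})=\sqrt{mn}$ and $q(K_{m,n})=m+n$: since $R_n^1=K_{1,1}\cup K_{n-1,n-1}$ we have $\rho(R_n^1)=n-1$ and $q(R_n^1)=2n-2$; since $T_n^0$ is $K_{n-1,n-1}$ together with an isolated vertex, $\rho(T_n^0)=n-1$; and since $Q_n^0$ is $K_{n,n-1}$ together with an isolated vertex, $q(Q_n^0)=2n-1$.

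For the two non-trivial inequalities in (1), I would first prove $\rho(Q_n^1)\leq n-1$ via the equitable partition of $Q_n^1$ into its four classes $X_1,Y_1,X_2,Y_2$ of sizes $1,n-2,n-1,2$. Writing out the quotient matrix and eliminating three of the four eigenvector coordinates in turn reduces the problem to showing that $\mu=\rho(Q_n^1)^2$ is the larger root of
\[
f(\mu):=\mu^2-(n^2-2n+2)\mu+2(n-1)(n-2).
\]
A direct computation gives $f((n-1)^2)=(n-1)(n-3)$; since the axis of the parabola sits at $(n^2-2n+2)/2<(n-1)^2$ for $n\geq 3$, one concludes $\rho(Q_n^1)^2\leq(n-1)^2$ with equality if and only if $n=3$. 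The strict inequality $\rho(S_n^1)<\rho(Q_n^1)$ then follows from the observation that $Q_n^1-v\cong S_n^1$ for any $v\in Y_2$, together with the fact that $Q_n^1$ is connected, so that deleting any vertex strictly decreases the spectral radius by standard Perron--Frobenius interlacing.

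For (2), the inequality $q(Q_n^1)<q(Q_n^0)=2n-1$ is an immediate application of Theorem \ref{ThLiNi5}: a quick count gives $e(Q_n^1)=n^2-2n+2$, hence $q(Q_n^1)\leq (n^2-2n+2)/n+n=2n-2+2/n<2n-1$ for $n\geq 3$. For the remaining $q(Q_n^1)>2n-2=q(R_n^1)$, I would note that $Q_n^1$ contains $K_{n,n-2}$ (with the complete bipartition on $X=X_1\cup X_2$ and $Y_1$) as a subgraph, and in addition the two edges joining $X_1$ to $Y_2$. Since $q(K_{n,n-2})=2n-2$, plugging the positive Perron eigenvector of $Q(K_{n,n-2})$, extended by zero on the two vertices of $Y_2$, into the Rayleigh quotient for $Q(Q_n^1)$ yields a strictly positive gain of $2x_v^2$ (where $v$ is the $X_1$-vertex) from those two extra edges, giving $q(Q_n^1)>2n-2$. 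The main technical step of the whole argument is the quotient-matrix reduction that establishes $\rho(Q_n^1)\leq n-1$; all remaining steps are routine.
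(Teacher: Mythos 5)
Your proposal is correct and follows essentially the same route as the paper: the same four-class reduction of $Q_n^1$ to the quadratic $\mu^2-(n^2-2n+2)\mu+2(n-1)(n-2)=0$ in $\mu=\rho^2$ (the paper writes out the eigenvalue equations rather than the quotient matrix, and solves the quadratic explicitly where you evaluate $f$ at $(n-1)^2$), the same Perron--Frobenius subgraph comparisons for $\rho(S_n^1)<\rho(Q_n^1)$ and $q(Q_n^1)>q(K_{n,n-2})=2n-2$, and the same application of Theorem~\ref{ThLiNi5} with $e(Q_n^1)=n(n-2)+2$ to get $q(Q_n^1)<2n-1$. The differences (quotient matrix vs.\ eigen-equations, Rayleigh quotient vs.\ citing Perron--Frobenius) are purely presentational.
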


\begin{proof}
(1) First, note that $S_n^1$ is a proper subgraph of $Q_n^1$. Thus,
$\rho(S_n^1)<\rho(Q_n^1)$.

Next, we show that $\rho(Q_n^1)\leq n-1$. Recall that $Q_n^1=K_{1,n-2}\sqcup\varPhi_{n-1,2}$.
Let $\{X_1,Y_1\}$ be the bipartition of $K_{1,n-2}$, where $|X_1|=1,|Y_1|=n-2$. Let $\{X_2,Y_2\}$
be the bipartition of $\varPhi_{n-1,2}$, where $|X_2|=n-1$, $|Y_2|=2$.
Set $\rho=\rho(Q_n^1)$. Let $X=(x_1,x_2,\ldots,x_n)$ be a positive unit
eigenvector of $Q_n^1$ corresponding to $\rho$.
Since any pair of vertices in the same partite set, say $v_1,v_2$, have the same neighborhood,
we know $x_{v_1}=x_{v_2}$.
Thus, we can assume that
\begin{align*}
& x:=x_v, v\in X_1;\\
& y:=x_v,v\in Y_1;\\
& z:=x_v, v\in X_2;\\
& t:=x_v, v\in Y_2.
\end{align*}
The eigenvalue equations can be reduced to the following four ones:
\begin{align}
&\rho x=(n-2)y+2t,\label{eq1}\\
&\rho y=x+(n-1)z,\label{eq2}\\
&\rho z=(n-2)y,\label{eq3}\\
&\rho t=x\label{eq4}.
\end{align}
Multiplying the two sides of (\ref{eq2}) by $\rho$, and putting (\ref{eq3}) into it, we have
\begin{align*}
{\rho}^2y=\rho x+(n-1)(n-2)y,
\end{align*}
that is,
\begin{align}
[{\rho}^2-(n-1)(n-2)]y=\rho x.\label{eq5}
\end{align}
Similarly, multiplying the two sides of (\ref{eq1}) by $\rho$, and eliminating $t$, we obtain
\begin{align}
({\rho}^2-2)x=(n-2)\rho y.\label{eq6}
\end{align}
Combining (\ref{eq5}) and (\ref{eq6}), and cancelling $xy$ yields
\begin{align}
\rho^4-(n^2-2n+2)\rho^2+2(n-1)(n-2)=0.\label{eq7}
\end{align}
By solving Equation (\ref{eq7}), we obtain
\begin{align*}
\rho^2=\frac{(n^2-2n+2)+\sqrt{(n^2-2n+2)^2-8(n-1)(n-2)}}{2}.
\end{align*}
By simple algebra, we get $\rho^2< (n-1)^2$ when $n\geq 4$ and $\rho=n-1$
when $n=3$.

(2) Since $Q_n^1$ contains $K_{n,n-2}$ as its proper subgraph, from the Perron-Frobenius Theorem,
we can see $q(Q_n^1)>q(K_{n,n-2})=2n-2=q(R_n^1)$. On the other hand, by Theorem \ref{ThLiNi5},
we have
\[
q(Q_n^1)\leq \frac{e(Q_n^1)}{n}+n=\frac{n(n-2)+2}{n}+n=2n-2+\frac{2}{n}<2n-1
\]
when $n\geq 3$. This proves the statement (2).
\end{proof}

\section{Structural lemmas}
In this section, we state some known structural theorems and
prove some new ones.

The first tool we need is the closure theory of Hamilton cycles in
balanced bipartite graphs introduced by Bondy and Chv\'{a}tal \cite{BoC}.
Let $G$ be a balanced bipartite graph on $2n$ vertices. The
\emph{bipartite closure} (or briefly, \emph{B-closure}) of $G$,
denoted by $cl_B(G)$, is the graph obtained from $G$ by recursively
joining pairs of nonadjacent vertices in different partition sets
whose degree sum is at least $n+1$ until no such pair remains. A
balanced bipartite graph $G$ on $2n$ vertices is \emph{B-closed} if
$G=cl_B(G)$, i.e., if every two nonadjacent vertices in distinct
partition sets of $G$ have degree sum at most $n$.

\begin{theorem}[Bondy and Chv\'{a}tal \cite{BoC}]\label{ThBoCh}
A balanced bipartite graph $G$ is Hamiltonian if and only if
$cl_B(G)$ is Hamiltonian.
\end{theorem}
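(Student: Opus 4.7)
The plan is to reduce to a single edge-addition lemma and iterate. Specifically, I will show that whenever $u \in X$ and $v \in Y$ are nonadjacent vertices of a balanced bipartite graph $G$ on $2n$ vertices with $d_G(u) + d_G(v) \geq n+1$, the graph $G$ is Hamiltonian if and only if $G + uv$ is Hamiltonian. Granted this one-edge claim, the theorem follows at once: the direction ``$G$ Hamiltonian implies $cl_B(G)$ Hamiltonian'' is trivial from $E(G) \subseteq E(cl_B(G))$, and the reverse direction is obtained by peeling off, one at a time, the edges that were added in forming $cl_B(G)$.

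For the one-edge claim the forward implication is again trivial, so the work is to show that if $G + uv$ has a Hamilton cycle $C$ then so does $G$. If $uv \notin E(C)$, then $C$ already lies in $G$ and we are done. Otherwise $C - uv$ is a Hamilton path $P = v_1 v_2 \cdots v_{2n}$ of $G$ with $v_1 = u$ and $v_{2n} = v$, and the task is to reroute $P$ into a Hamilton cycle of $G$ that avoids the edge $uv$.

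To do this I would apply a bipartite Chv\'{a}tal--Erd\H{o}s-style rotation. Since $u \in X$ and $v \in Y$, the path $P$ alternates between the parts, so $v_i \in X$ iff $i$ is odd. Define
\[
S = \{i : 1 \leq i \leq 2n-1,\ v v_i \in E(G)\}, \qquad T = \{i : 1 \leq i \leq 2n-1,\ u v_{i+1} \in E(G)\}.
\]
Every neighbor of $v$ lies in $X$, so $i \in S$ forces $i$ odd; every neighbor of $u$ lies in $Y$, so $i \in T$ forces $i+1$ even, hence again $i$ odd. Thus $S, T \subseteq \{1, 3, \ldots, 2n-1\}$, a set of size $n$, and since all neighbors of $u$ and $v$ appear on $P$ we have $|S| = d_G(v)$ and $|T| = d_G(u)$. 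Hence $|S| + |T| \geq n+1 > n$, so by pigeonhole there exists $i \in S \cap T$, and
\[
v_1 v_2 \cdots v_i v_{2n} v_{2n-1} \cdots v_{i+1} v_1
\]
is a Hamilton cycle of $G$.

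The main subtlety, and the reason the bipartite degree-sum threshold is $n+1$ rather than $2n$, is the parity bookkeeping: one must verify that both index sets $S$ and $T$ actually live inside the $n$-element universe of odd indices, which uses crucially that the path endpoints $u$ and $v$ lie in opposite parts of the bipartition. Once this observation is in hand, the pigeonhole estimate closes the argument, and iterating the one-edge claim along the sequence of closure edges completes the proof.
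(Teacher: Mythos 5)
Your proof is correct. Note, however, that the paper does not prove this statement at all: it is quoted as a known theorem of Bondy and Chv\'atal \cite{BoC}, so there is no internal proof to compare against. Your argument is essentially the classical one from that original paper: reduce to the single-edge lemma (adding one edge $uv$ between nonadjacent vertices in opposite parts with $d_G(u)+d_G(v)\geq n+1$ preserves non-Hamiltonicity), then iterate along the sequence of closure edges, noting that at each step the degree-sum condition holds in the graph to which the edge is being added. The rotation--pigeonhole step is carried out correctly: since $u\in X$ and $v\in Y$, both index sets $S$ and $T$ live in the $n$-element set of odd indices, so $|S|+|T|=d_G(v)+d_G(u)\geq n+1$ forces a common index $i$, and the rerouted cycle $v_1\cdots v_i v_{2n}\cdots v_{i+1}v_1$ uses only edges of $G$ --- in particular it cannot accidentally use $uv$, because its two chords $v_iv_{2n}$ and $v_1v_{i+1}$ are by construction edges of $G$ while $uv\notin E(G)$ (this also shows $1\notin S$ and $2n-1\notin T$, so the rerouting is nondegenerate). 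One small remark: the well-definedness of $cl_B(G)$ (independence of the order in which edges are added) is implicitly assumed by the notation but is not needed for your argument, since the equivalence holds for any graph produced by the recursive procedure.
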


\begin{lemma}\label{LeClosed}\footnote{This result may have appeared in
some early reference, but we could not find any. We include its short proof
here to keep our paper self-contained.}
A balanced bipartite graph $G$ is traceable if and only if $cl_B(G)$
is traceable.
\end{lemma}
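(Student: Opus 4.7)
The plan is to reduce the lemma to the following single-step statement: whenever $u\in X$ and $v\in Y$ are non-adjacent in $G$ with $d_G(u)+d_G(v)\geq n+1$, the bipartite graph $G+uv$ is traceable if and only if $G$ is traceable. Iterating this equivalence in reverse along the construction of $cl_B(G)$ delivers the nontrivial direction of the lemma, while the other direction is immediate from $G\subseteq cl_B(G)$.

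To prove the single-step statement, I would take a Hamilton path $P=w_1w_2\cdots w_{2n}$ of $G+uv$, with $w_1\in X$ and $w_{2n}\in Y$. We may assume $uv\in E(P)$, for otherwise $P$ is already a Hamilton path of $G$. The key device is to convert $P$ into a cycle and invoke the Hamilton cycle version of the closure theorem. Since $w_1$ and $w_{2n}$ lie in opposite parts, we may form the balanced bipartite graph $H:=G+w_1w_{2n}$ on $2n$ vertices, and the closed walk $w_1w_2\cdots w_{2n}w_1$ is then a Hamilton cycle of $H+uv$.

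Next I would apply Theorem~\ref{ThBoCh} to $H$: since $uv\notin E(H)$ and $d_H(u)+d_H(v)\geq d_G(u)+d_G(v)\geq n+1$, the graph $H$ is Hamiltonian iff $H+uv$ is Hamiltonian, so $H$ admits a Hamilton cycle $C$. If $w_1w_{2n}\in E(C)$, deleting this edge from $C$ yields a Hamilton path of $G$; if $w_1w_{2n}\notin E(C)$, then $C$ is already a Hamilton cycle contained in $G$, and removing any one of its edges produces a Hamilton path.

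The only aspect requiring attention---not a genuine obstacle---is the bookkeeping for degenerate configurations. For $n=1$ the hypothesis is vacuous. For $n\geq 2$, since $u$ and $v$ are consecutive on $P$ while $w_1,w_{2n}$ are its endpoints, we have $\{u,v\}\neq\{w_1,w_{2n}\}$, so $uv\neq w_1w_{2n}$ and indeed $uv\notin E(H)$. Moreover, if $u$ or $v$ happens to coincide with $w_1$ or $w_{2n}$, passing from $G$ to $H$ only enlarges the relevant degree, so the degree-sum hypothesis needed for Theorem~\ref{ThBoCh} is safely inherited.
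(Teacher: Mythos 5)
Your proposal is correct, but it is organized differently from the paper's proof, so a comparison is worthwhile. Both arguments turn on the same key trick: join the two ends of a Hamilton path (which lie in opposite parts) to convert traceability into Hamiltonicity, and then let Theorem~\ref{ThBoCh} absorb an extra closure edge. The paper, however, applies this trick exactly once and globally: assuming $cl_B(G)$ has a Hamilton path $P$ with ends $x,y$ but no Hamilton cycle, it sets $G'=G+xy$, observes the monotonicity relation $cl_B(G)+xy\subseteq cl_B(G')$, concludes that $cl_B(G')$ is Hamiltonian and hence (by Theorem~\ref{ThBoCh}) that $G'$ is Hamiltonian, whence $G$ is traceable --- no induction and no degenerate cases. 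You instead de-close edge by edge, proving the sharper local statement that adding a single edge $uv$ with $d_G(u)+d_G(v)\geq n+1$ preserves traceability in both directions, and iterating along the closure sequence. This costs you two things the paper avoids: you need the single-edge form of Bondy--Chv\'atal, which strictly speaking requires a one-line derivation from Theorem~\ref{ThBoCh} as stated (namely $cl_B(H)=cl_B(H+uv)$ when the degree-sum condition holds, by monotonicity and idempotence of the closure), and you need the bookkeeping that $uv\neq w_1w_{2n}$, which you handle correctly. What your route buys is a more granular intermediate result --- the exact bipartite Hamilton-path analogue of the original Bondy--Chv\'atal single-edge lemma --- which is of some independent interest; what the paper's route buys is brevity, since monotonicity of the closure lets one bypass the induction entirely.
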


\begin{proof}
Clearly $G$ being traceable implies that $cl_B(G)$ being traceable. Now
we assume that $cl_B(G)$ is traceable. If $cl_B(G)$ is Hamiltonian,
then $G$ is Hamiltonian by Theorem \ref{ThBoCh}. Now we assume that
$cl_B(G)$ has a Hamilton path $P$ but no Hamilton cycle. Let $x,y$
be the two end-vertices of $P$. Then $xy\notin E(cl_B(G))$.

Let $G'=G+xy$. Then $cl_B(G)+xy \subseteq cl_B(G')$. Thus, $cl_B(G')$ is Hamiltonian,
and $G'$ is Hamiltonian by Theorem \ref{ThBoCh}. So, $G$ is traceable.
\end{proof}

We need two theorems proved in \cite{LN}.
\begin{theorem}[Li and Ning \cite{LN}]\label{ThLiNi3}
Let $G$ be a B-closed balanced bipartite graph on $2n$ vertices. If
$n\geq 2k+1$ for some $k\geq 1$ and $$e(G)>n(n-k-1)+(k+1)^2,$$ then
$G$ contains a complete bipartite subgraph of order $2n-k$.
Furthermore, if $\delta(G)\geq k$, then $K_{n,n-k}\subseteq G$.
\end{theorem}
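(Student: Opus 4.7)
The plan is to pass to the quasi-complement $\widehat{G}$ and control its matching number. The edge hypothesis translates to $e(\widehat{G}) = n^{2} - e(G) < (k+1)(n-k-1)$, and the B-closed hypothesis becomes the clean statement that every edge $xy$ of $\widehat{G}$ satisfies $d_{\widehat{G}}(x) + d_{\widehat{G}}(y) \geq n$, since $d_{\widehat{G}}(v) = n - d_{G}(v)$.

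The core of the proof will be to show that the matching number $\nu(\widehat{G})$ is at most $k$. I would argue by contradiction: assume $\widehat{G}$ has a matching of size at least $k+1$ and restrict it to a sub-matching $M$ of exactly $k+1$ edges, so $V(M)$ has exactly $k+1$ vertices on each side of the bipartition. Summing the degree-sum inequality over the $k+1$ edges of $M$ gives $\sum_{v\in V(M)} d_{\widehat{G}}(v) \geq (k+1)n$. Writing this sum as $2e_{1} + e_{2}$, where $e_{1} = e(\widehat{G}[V(M)])$ and $e_{2} = e_{\widehat{G}}(V(M), V\setminus V(M))$, and combining with $e_{1} + e_{2} \leq e(\widehat{G}) < (k+1)(n-k-1)$, I expect to derive $e_{1} > (k+1)^{2}$, which contradicts the trivial bound $e_{1} \leq (k+1)^{2}$ coming from the two sides of $V(M)$ each having size $k+1$. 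Once $\nu(\widehat{G}) \leq k$ is in place, K\"onig's theorem applied to the bipartite graph $\widehat{G}$ yields a vertex cover $A \cup B$ with $A \subseteq X$, $B \subseteq Y$ and $|A|+|B| \leq k$, and every pair in $(X\setminus A) \times (Y\setminus B)$ is a $G$-edge, producing a $K_{X\setminus A,\,Y\setminus B}$ subgraph of order at least $2n - k$.

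For the ``furthermore'' conclusion, I would do a short case analysis depending on whether $A$ or $B$ is empty. If $|A|=0$ (or symmetrically $|B|=0$), then $K_{X,\,Y\setminus B}$ is already contained in $G$ and contains $K_{n,n-k}$. Otherwise both $|A|,|B| \geq 1$ and hence both are strictly less than $k$; for any $y \in Y\setminus B$, all its $\widehat{G}$-neighbors lie in $A$, so $d_{G}(y) \geq n - |A|$. The B-closed property then forces any $x \in A$ non-adjacent in $G$ to such a $y$ to satisfy $d_{G}(x) \leq |A| < k$, contradicting $\delta(G) \geq k$. This shows every $x \in X$ is $G$-adjacent to every $y \in Y\setminus B$, giving $K_{n, n-|B|} \subseteq G$ and hence $K_{n, n-k} \subseteq G$.

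The main obstacle is really conceptual: noticing that the right quantity to control is the matching number of $\widehat{G}$ rather than the edges of $G$ directly. Once that is identified, the contradiction via the decomposition $2e_{1} + e_{2}$ paired with the two simple upper bounds on $e_{1} + e_{2}$ and on $e_{1}$ is essentially forced, and the remainder is standard K\"onig duality plus a tidy B-closed argument for the last assertion.
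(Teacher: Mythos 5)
Your proposal is correct, and every step checks out: the translation of B-closedness into ``$d_{\widehat{G}}(x)+d_{\widehat{G}}(y)\geq n$ for every edge $xy$ of $\widehat{G}$'' is exact; summing over a matching $M$ of size $k+1$ gives $2e_1+e_2\geq (k+1)n$, which against $e_1+e_2\leq e(\widehat{G})<(k+1)(n-k-1)$ forces $e_1>(k+1)^2$, contradicting the trivial bound $e_1\leq (k+1)^2$; K\"onig's theorem then converts $\nu(\widehat{G})\leq k$ into a cover $A\cup B$ with $|A|+|B|\leq k$, and the complete bipartite graph on $(X\setminus A)\cup(Y\setminus B)$ has order at least $2n-k$. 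The ``furthermore'' analysis is also watertight: if both $A,B$ are nonempty then $|A|,|B|\leq k-1$, every $y\in Y\setminus B$ has $d_G(y)\geq n-|A|$, and a $G$-nonedge from $A$ to $Y\setminus B$ would force some vertex of $A$ to have degree at most $|A|\leq k-1<k$, against $\delta(G)\geq k$.

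One point of comparison must be qualified: this paper does not prove the statement at all --- it is imported verbatim from the earlier paper \cite{LN}, so there is no in-paper proof to measure you against. Two remarks relative to that source are worth making. First, your argument never invokes the hypothesis $n\geq 2k+1$; the matching/K\"onig route works whenever the edge hypothesis is satisfiable, so you prove a formally stronger statement. That hypothesis is what one needs for the more obvious attack --- showing that all vertices of degree at least $n-k$ are pairwise adjacent across the parts, via $2(n-k)\geq n+1$ --- and that attack alone is in fact insufficient, because low-degree vertices split between the two sides defeat the naive edge count; your decision to control $\nu(\widehat{G})$ instead of counting low-degree vertices is precisely what avoids this trap. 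Second, a cosmetic gap: in the case $B=\emptyset$, $A\neq\emptyset$ you obtain $K_{n-|A|,n}$ with the full side inside $Y$, whereas the paper's convention fixes partition sets; one sentence invoking the symmetry of the hypotheses in $X$ and $Y$ (both parts have size $n$, and $\delta(G)\geq k$ is side-symmetric) closes this and yields $K_{n,n-k}\subseteq G$ as stated.
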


\begin{theorem}[Li and Ning \cite{LN}]\label{ThLiNi4}
Let $G$ be a balanced bipartite graph on $2n$ vertices. If
$\delta(G)\geq k\geq 1$, $n\geq 2k+1$ and
$$e(G)>n(n-k-1)+(k+1)^2,$$ then $G$ is Hamiltonian unless $G\subseteq B_n^k$.
\end{theorem}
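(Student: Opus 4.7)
The plan is to reduce to the bipartite closure using Theorem~\ref{ThBoCh}, then apply the structural Theorem~\ref{ThLiNi3} to lock in a large complete bipartite subgraph, and finally dispose of the remaining case analysis by the B-closure degree-sum condition alone.

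Concretely, I would set $H=cl_B(G)$, so that $H$ is B-closed with $\delta(H)\geq k$ and $e(H)\geq e(G)>n(n-k-1)+(k+1)^2$. Since $G\subseteq H$, by Theorem~\ref{ThBoCh} it is enough to show that either $H$ is Hamiltonian or $H\subseteq B_n^k$. So assume $H$ is non-Hamiltonian. Theorem~\ref{ThLiNi3} applied to $H$ gives $K_{n,n-k}\subseteq H$; by $X\leftrightarrow Y$ symmetry I may assume the size-$n$ part is all of $X$ and the size-$(n-k)$ part is a set $A\subseteq Y$. Write $Y'=Y\setminus A$, so $|Y'|=k$ and, crucially, $d_H(x)\geq n-k$ for every $x\in X$.

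Now I split on the minimum of $d_H(y')$ over $y'\in Y'$. If every $y'\in Y'$ has $d_H(y')\geq k+1$, then for any non-edge $xy'$ with $x\in X$, $y'\in Y'$, the B-closure condition would give $d_H(x)+d_H(y')\leq n$, contradicting $(n-k)+(k+1)=n+1$; hence all $X$-$Y'$ edges are present, so $H=K_{n,n}$ is Hamiltonian, a contradiction. Otherwise some $y_\ast'\in Y'$ satisfies $d_H(y_\ast')=k$; let $X_1=N_H(y_\ast')$ and $X_2=X\setminus X_1$, of sizes $k$ and $n-k$. For each $x\in X_2$, the non-edge $xy_\ast'$ and B-closure force $d_H(x)\leq n-k$, which combined with $d_H(x)\geq n-k$ yields equality and thus $d_{Y'}(x)=0$. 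So every edge of $H$ touching $Y'$ has its other endpoint in $X_1$, and identifying $(X_1,X_2)$ and $(A,Y')$ with the four blocks of $B_n^k$ gives $H\subseteq B_n^k$, hence $G\subseteq B_n^k$.

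The substantive work has already been absorbed into Theorem~\ref{ThLiNi3}, so what remains is essentially mechanical. The main point to be careful about is to exploit the ``furthermore'' clause of Theorem~\ref{ThLiNi3}: it is precisely the uniform lower bound $d_H(x)\geq n-k$ for \emph{every} $x\in X$ that makes the B-closure inequality rigid in both cases and lets the two cases collapse to ``complete bipartite'' or ``$\subseteq B_n^k$'' respectively. The alternative configuration in which the size-$(n-k)$ side of $K_{n,n-k}$ sits in $X$ rather than $Y$ is handled by the obvious $X\leftrightarrow Y$ symmetry of the hypotheses and of the family $B_n^k$.
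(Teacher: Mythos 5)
Your proof is correct. Note, however, that this statement is one the paper imports verbatim from \cite{LN} (it is stated as a known theorem, ``We need two theorems proved in \cite{LN}''), so there is no in-paper proof to compare against; what can be said is that your argument follows exactly the methodology this paper uses for its own Hamilton-path analogue, Lemma~\ref{LeTraceable}: pass to the B-closure, invoke the complete-bipartite-subgraph theorem (there Lemma~\ref{LeSubgraph}, here Theorem~\ref{ThLiNi3}), and then exploit the rigidity of the degree-sum condition in a B-closed graph. Your two cases check out: if every vertex of $Y'$ had degree at least $k+1$, B-closedness together with $d_H(x)\geq n-k$ for all $x\in X$ forces $H=K_{n,n}$, which is Hamiltonian since $n\geq 2k+1\geq 3$; otherwise some $y_*'$ has degree exactly $k$, and for each $x\in X\setminus N_H(y_*')$ the non-edge $xy_*'$ pins $d_H(x)=n-k$, hence $N_H(x)=A$, so all edges meeting $Y'$ land in $N_H(y_*')$, which is precisely the block structure of $B_n^k$. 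The only point worth making explicit in a final write-up is the bookkeeping behind ``$X\leftrightarrow Y$ symmetry'': since the paper fixes partition sets, one should note that $B_n^k$ with its two (equal-sized) parts exchanged is isomorphic to $B_n^k$, so the containment $G\subseteq B_n^k$ is unaffected by which side of the $K_{n,n-k}$ from Theorem~\ref{ThLiNi3} is the full partition class. One could also observe (though you do not need it) that in your second case \emph{every} vertex of $Y'$ has degree exactly $k$, since a vertex of $Y'$ of degree at least $k+1$ would be forced adjacent to the degree-$(n-k)$ vertices of $X_2$, contradicting $N_H(x)=A$.
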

Using the above two theorems, we prove the following corresponding lemmas
for the existence of Hamilton paths and complete bipartite subgraphs
in balanced bipartite graphs, respectively.
\begin{lemma}\label{LeSubgraph}
Let $G$ be a B-closed balanced bipartite graph on $2n$ vertices. If
$n\geq 2k+3$ for some $k\geq 1$ and $$e(G)>n(n-k-2)+(k+2)^2,$$ then
$G$ contains a complete bipartite subgraph on $2n-k-1$ vertices.
Furthermore, if $\delta(G)\geq k$, then $K_{n,n-k-1}\subseteq G$, or
$k=1$ and $K_{n-1,n-1}\subseteq G$.
\end{lemma}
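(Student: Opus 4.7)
The first assertion should follow immediately from Theorem~\ref{ThLiNi3} applied with $k+1$ in place of $k$: the assumption $n\ge 2k+3$ is exactly $n\ge 2(k+1)+1$, and $e(G)>n(n-k-2)+(k+2)^2$ is exactly $e(G)>n\bigl(n-(k+1)-1\bigr)+\bigl((k+1)+1\bigr)^2$, so that theorem delivers a complete bipartite subgraph of $G$ of order $2n-(k+1)=2n-k-1$.

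For the ``furthermore'' part, I would assume $\delta(G)\ge k$ and choose, among all complete bipartite subgraphs of $G$ of order at least $2n-k-1$, one $K_{\alpha,\beta}$ that is inclusion-maximal, meaning no vertex can be added to either of its parts while preserving the complete bipartite structure; write $A\subseteq X$ of size $\alpha$ and $B\subseteq Y$ of size $\beta$ for these parts. Since $\alpha,\beta\le n$ and $\alpha+\beta\ge 2n-k-1$ we immediately get $\alpha,\beta\ge n-k-1$; and if $\alpha=n$ or $\beta=n$, then $K_{\alpha,\beta}$ trivially contains $K_{n,n-k-1}$ and we are done.

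Otherwise $\alpha,\beta\le n-1$, and I would then establish $\alpha\le n-k$ as follows. Pick any $u\in X\setminus A$; by the maximality of $K_{\alpha,\beta}$ there must be some $v\in B$ with $uv\notin E(G)$, since otherwise $(A\cup\{u\},B)$ would be a strictly larger complete bipartite subgraph. Because $G$ is B-closed, $d(u)+d(v)\le n$; but $d(v)\ge\alpha$ (as $v$ is joined to every vertex of $A$) and $d(u)\ge k$ (by $\delta(G)\ge k$), so $\alpha\le n-k$. A symmetric argument swapping the roles of the two partite sets gives $\beta\le n-k$. Combining these with $\alpha+\beta\ge 2n-k-1$ forces $2n-k-1\le 2(n-k)$, i.e.\ $k\le 1$. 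Hence for $k\ge 2$ this branch is vacuous, so we must have had $\alpha=n$ or $\beta=n$ and $K_{n,n-k-1}\subseteq G$; while for $k=1$ the inequalities tighten to $\alpha=\beta=n-1$, yielding $K_{n-1,n-1}\subseteq G$, which is precisely the exceptional conclusion of the lemma. The main technical point is this maximality-plus-B-closure step, which bridges the gap between the hypothesis $\delta(G)\ge k$ that is available and the stronger $\delta(G)\ge k+1$ that Theorem~\ref{ThLiNi3} at parameter $k+1$ would need in order to output $K_{n,n-k-1}$ on its own; it is exactly this gap that produces the $k=1$ exception.
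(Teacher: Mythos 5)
Your proposal is correct and takes essentially the same approach as the paper: the first part is the identical application of Theorem~\ref{ThLiNi3} with $k+1$ in place of $k$, and for the ``furthermore'' part the paper likewise fixes a maximal complete bipartite subgraph $K_{s,t}$ (maximizing the larger side $s$) and combines B-closedness with $\delta(G)\geq k$ to force $s\leq n-k$, whence $s+t\geq 2n-k-1$ yields $k\leq 1$ and the exceptional $K_{n-1,n-1}$. Your inclusion-maximality/non-edge step is just the contrapositive of the paper's ``if $s+k\geq n+1$ then the subgraph extends'' argument, so the two proofs coincide in substance.
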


\begin{proof}
The existence of a complete bipartite subgraph on $2n-k-1$ vertices
can be deduced from Theorem \ref{ThLiNi3}. Let $X,Y$ be the partition sets
of $G$, and $X'\subseteq X$, $Y'\subset Y$ such that $G[X'\cup
Y']=K_{s,t}$, where $s+t\geq 2n-k-1$ and $s\geq t$. We choose $s,t$
such that $s$ is as large as possible.

Now suppose that $\delta(G)\geq k$. If $K_{n,n-k-1}\not\subseteq G$,
then $n-k\leq t\leq s\leq n-1$. Note that every vertex in
$X\backslash X'$ has degree at least $k$ and every vertex in $Y'$
has degree at least $s$. If $s+k\geq n+1$, then every vertex in
$X\backslash X'$ and every vertex in $Y'$ are adjacent, and
$K_{n,n-k}\subseteq G$, a contradiction. This implies that
$s+k\leq n$, i.e., $s\leq n-k$. Hence we have $s=t=n-k$. Recall that
$s+t\geq 2n-k-1$. We have $k=1$ and $s=t=n-1$. Thus
$K_{n-1,n-1}\subseteq G$.
\end{proof}

\begin{lemma}\label{LeTraceable}
Let $G$ be a balanced bipartite graph on $2n$ vertices. If
$\delta(G)\geq k\geq 1$, $n\geq 2k+3$ and
$$e(G)>n(n-k-2)+(k+2)^2,$$ then $G$ is traceable unless $G\subseteq
Q_n^k$, or $k=1$ and $G\subseteq R_n^1$.
\end{lemma}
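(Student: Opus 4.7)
The approach is to mimic the proof of Theorem \ref{ThLiNi4} (the Hamilton-cycle counterpart) in the Hamilton-path setting by a two-vertex enlargement of $G$. I would first apply Lemma \ref{LeClosed} to replace $G$ by its B-closure, which only adds edges (so the hypotheses $\delta\ge k$ and the edge lower bound persist) and preserves any containment of $cl_B(G)$ inside $Q_n^k$ or $R_n^1$; henceforth $G$ is B-closed. Lemma \ref{LeSubgraph} then yields either $K_{n,n-k-1}\subseteq G$ or else $k=1$ with $K_{n-1,n-1}\subseteq G$. The latter is handled directly: placing $K_{n-1,n-1}$ on $\{x_1,\dots,x_{n-1}\}\cup\{y_1,\dots,y_{n-1}\}$ with remaining vertices $x_n,y_n$ of degree at least $1$, I use the Hamilton-connectedness of $K_{n-1,n-1}$ between opposite parts to construct a Hamilton path whenever $x_n$ has a neighbor in $Y\setminus\{y_n\}$ and $y_n$ has a neighbor in $X\setminus\{x_n\}$; otherwise the B-closure condition forces $N(x_n)=\{y_n\}$, $N(y_n)=\{x_n\}$, and no further edges of $G$ between $\{x_n,y_n\}$ and the $K_{n-1,n-1}$-block, giving $G\subseteq K_{1,1}\cup K_{n-1,n-1}=R_n^1$.

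For the main branch $K_{n,n-k-1}\subseteq G$, write $Y=Y_1\cup Y_2$ with $|Y_1|=n-k-1$ complete to $X$ and set $X_1:=N_G(Y_2)$. Because each $y\in Y_2$ satisfies $N(y)\subseteq X_1$ and $d(y)\ge k$, we have $|X_1|\ge k$; if equality holds, counting yields $N(y)=X_1$ for every $y\in Y_2$ and $G$ coincides with $Q_n^k$. For $|X_1|\ge k+1$, I introduce the enlargement $G^{\ast}$ obtained from $G$ by adjoining a vertex $v$ on the $X$-side adjacent to all of $Y\cup\{u\}$ and a vertex $u$ on the $Y$-side adjacent to all of $X\cup\{v\}$. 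Then $G^{\ast}$ is balanced bipartite on $2(n+1)$ vertices with $\delta(G^{\ast})\ge k+1$, $n+1\ge 2(k+1)+1$, and $e(G^{\ast})=e(G)+2n+1>(n+1)(n-k-1)+(k+2)^2$ by a short manipulation. Applying Theorem \ref{ThLiNi4} at parameter $k+1$ to $G^{\ast}$ yields either $G^{\ast}\subseteq B_{n+1}^{k+1}$ or a Hamilton cycle in $G^{\ast}$. In the containment case, the degree-$(n+1)$ vertices $u$ and $v$ must occupy the hubs of $B_{n+1}^{k+1}$ on their respective sides; restricting the $B_{n+1}^{k+1}$-structure to $V(G)$ yields $G\subseteq Q_n^k$, and the partition-swapped variant (giving $G\subseteq Q_n^{n-k-1}$) is ruled out because $Q_n^{n-k-1}$ contains no $K_{n,n-k-1}$ when $k\ge 1$.

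The main obstacle is the Hamiltonian case of $G^{\ast}$: a Hamilton cycle through the edge $uv$ directly gives a Hamilton path of $G$ upon deleting $u$ and $v$, but a cycle avoiding $uv$ yields only a two-path cover of $V(G)$. I would resolve this by exchanging the cycle-edges $\{ux_i,vy_j\}$ for $\{uv,x_iy_j\}$ at a pair of $u$- and $v$-incident edges; this swap is legal whenever $x_iy_j\in E(G)$, which by $K_{n,n-k-1}\subseteq G$ is automatic as soon as $y_j\in Y_1$. Since $|Y_1|=n-k-1$ greatly exceeds $|Y_2|=k+1$, one can always choose a $v$-neighbor on the cycle lying in $Y_1$ and perform the swap, converting any Hamilton cycle of $G^{\ast}$ into one through $uv$ and thereby producing a Hamilton path of $G$.
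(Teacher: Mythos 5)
Your strategy is genuinely different from the paper's: the paper analyzes the B-closed graph directly (largest $t$ with $K_{n,t}\subseteq G'$, showing every vertex of $Y\setminus Y'$ has degree exactly $k$, then a ``frontier/saturated vertex'' analysis with explicit Hamilton path constructions), whereas you reduce to the Hamilton-cycle theorem (Theorem \ref{ThLiNi4}) via a two-vertex augmentation $G^{\ast}$. Your containment branch is correct and clean: the edge-count inequality $e(G^{\ast})>(n+1)(n-k-1)+(k+2)^2$ does check out, the degree-$(n+1)$ vertices $u,v$ must indeed sit at hubs of $B_{n+1}^{k+1}$, and the partition-swapped alternative is correctly eliminated by the presence of $K_{n,n-k-1}$ with full side $X$ (since $n\geq 2k+3$ gives $n-k-1>k$). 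The $k=1$, $K_{n-1,n-1}$ case is also essentially fine (though B-closure does not by itself ``force'' $N(y_n)=\{x_n\}$; the correct statement is that if $y_n$ had a second neighbor, Hamilton-connectedness of $K_{n-1,n-1}$ would already make $G$ traceable).

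The genuine gap is the Hamiltonian branch. Your swap exchanges \emph{cycle}-edges $ux_i$ and $vy_j$ for $uv$ and $x_iy_j$, so $y_j$ must be one of the only \emph{two} neighbors of $v$ on the cycle $C$; nothing prevents both of these from lying in $Y_2$, since $|Y_2|=k+1\geq 2$ and $v$ is adjacent to all of $Y$ in $G^{\ast}$. Your justification ``$|Y_1|$ greatly exceeds $|Y_2|$, so one can always choose a $v$-neighbor on the cycle lying in $Y_1$'' conflates neighbors of $v$ in $G^{\ast}$ (of which there are many in $Y_1$) with neighbors of $v$ on $C$ (of which there are two). This is not a cosmetic issue, because the implication ``$G^{\ast}$ Hamiltonian $\Rightarrow$ $G$ traceable'' is false without invoking structural information: take $G=R_n^1=K_{1,1}\cup K_{n-1,n-1}$, one of the lemma's own exceptional graphs, which is not traceable, yet its augmentation has the Hamilton cycle $x_0\,y_0\,v\,y_1\,x_1\,y_2\,x_2\cdots y_{n-1}\,x_{n-1}\,u\,x_0$ (here $\{x_0,y_0\}$ is the $K_{1,1}$-block); this cycle avoids $uv$, and deleting $u,v$ leaves two paths that cannot be merged. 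So any argument in the Hamiltonian case must exploit the main-branch structure, and when both cycle-neighbors of $v$ lie in $Y_2$ your swap cannot be performed: patching it would require further rotations through the $Y_2$-vertices, whose degrees are only about $k$, so the needed chords are not guaranteed --- which is exactly why the paper instead carries out the direct frontier-vertex analysis on the B-closed graph.
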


\begin{proof}
Let $G'=cl_B(G)$. If $G'$ is traceable, then so is $G$ by Lemma
\ref{LeClosed}. Now we assume that $G'$ is not traceable. We first
deal with the case $k=1$. Note that $\delta(G')\geq\delta(G)$ and
$e(G')\geq e(G)$. By Lemma \ref{LeSubgraph}, either
$K_{n,n-2}\subseteq G'$ or $K_{n-1,n-1}\subseteq G'$. Recall that
$\delta(G')\geq 1$. It is easy to check the only
non-traceable balanced bipartite graphs of order $2n$ without
isolated vertices containing $K_{n,n-2}$ or $K_{n-1,n-1}$ are
$Q_n^1$ and $R_n^1$, respectively. Thus $G'=Q_n^1$ or $R_n^1$, and this implies
that $G\subseteq Q_n^1$ or $G\subseteq R_n^1$.

Now assume that $k\geq 2$. By Lemma \ref{LeSubgraph},
$K_{n,n-k-1}\subseteq G'$. Let $t$ be the largest integer such that
$K_{n,t}\subseteq G$. Clearly $n-k-1\leq t<n$. Let $X,Y$ be the
partition sets of $G$, and $Y'\subset Y$ such that $G[X\cup
Y']=K_{n,t}$.

We first claim that $t=n-k-1$. If $t\geq n-k+1$, then every vertex
of $X$ has degree at least $n-k+1$ in $G'$ and every vertex in $Y$
has degree at least $k$ in $G'$, implying that $G'$ is complete
bipartite. Thus $G'$ is traceable, a contradiction. Suppose now that
$t=n-k$. If some vertex in $Y\backslash Y'$ has degree at least
$k+1$ in $G'$, then it will be adjacent to every vertex in $X$ in
$G'$, a contradiction. So we conclude that every vertex in
$Y\backslash Y'$ has degree exactly $k$. If a vertex $x\in X$ is
adjacent to some vertex in $Y\backslash Y'$, then $d_{G'}(x)\geq
n-k+1$ and $x$ will be adjacent to every vertex in $Y\backslash Y'$.
This implies that all the vertices in $Y\backslash Y'$ are adjacent
to $k$ common vertices in $X$, i.e., $G'=B_n^k$. Note that $B_n^k$
is traceable, a contradiction. Thus $t=n-k-1$, as we claimed.

Next we show that every vertex of $Y\backslash Y'$ has degree exactly
$k$. Suppose that there is a vertex $y\in Y\backslash Y'$ which has
degree at least $k+1$ in $G'$. If $d_{G'}(y)\geq k+2$, then since
$d_{G'}(x)\geq n-k-1$ for every $x\in X$, $y$ will be adjacent to
every vertex of $X$, a contradiction. So we have $d_{G'}(y)=k+1$.
Let $X'$ be the set of $n-k-1$ vertices in $X$ nonadjacent to $y$.
Then for every $x\in X'$, $x$ is nonadjacent to any vertex of
$Y\backslash Y'$; otherwise $d_{G'}(x)\geq n-k$, implying that $xy\in
E(G)$. Now consider the subgraph $H=G'[X\backslash X',Y\backslash
Y']$. Note that for every $y'\in Y\backslash Y'$, $d_H(y')\geq k$
and for every $x'\in X\backslash X'$, $d_H(x')\geq 1$. If every
vertex in $X\backslash X'$ has degree at least 2 in $H$, then
$cl_B(H)$ is complete and bipartite, implying that $H$ is traceable; if there
is a vertex, say $x$ in $X\backslash X'$, with degree 1 in $H$,
i.e., $x$ has only one neighbor $y$ in $H$, then $H-\{x,y\}$ is
complete and bipartite, also implying that $H$ is traceable. Note that
$G'[X,Y']$ is complete. So $G'$ is traceable, a contradiction. Thus we
conclude that every vertex of $Y\backslash Y'$ has degree exactly
$k$.

Let $x$ be an arbitrary vertex in $X$. If $x$ is adjacent to at
least two vertices in $Y\backslash Y'$, then $d(x)\geq n-k+1$,
implying that $x$ is adjacent to all vertices in $Y\backslash Y'$. Thus
we conclude that every vertex in $X$ is adjacent to either no
vertices, or only one vertex, or all vertices in $Y\backslash Y'$.
We call the vertex $x$ a \emph{simple} (\emph{frontier}, \emph{saturated}, resp.)
vertex if $x$ is adjacent to no (one, every, resp.) vertex in
$Y\backslash Y'$.

If every vertex in $Y\backslash Y'$ is adjacent to at least two
frontier vertices, then we can take $k+1$ vertex-disjoint $P_3$'s
such that every vertex in $Y\backslash Y'$ is the center of a $P_3$.
Since $G'[X,Y']$ is complete and bipartite, it is easy to check that
$G'$ is traceable, a contradiction. If every vertex in $Y\backslash
Y'$ is adjacent to exactly one frontier vertex, implying that there
are $k-1$ saturated vertices. (Note that every vertex in $Y\backslash
Y'$ is adjacent to the same number of frontier vertices.) In this
case, there are $k-1$ vertex-disjoint $P_3$'s with the centers in
$Y\backslash Y'$ and two additional independent edges incident to
vertices in $Y\backslash Y'$. Since $G'[X,Y']$ is complete and
bipartite, it is easy to check that $G'$ is traceable, a
contradiction.

Now assume that there are no frontier vertices. Thus every vertex
in $Y\backslash Y'$ is adjacent to (the common) $k$ saturated
vertices. In this case $G'=Q_n^k$ and $G\subseteq Q_n^k$.
\end{proof}

Finally, we recall two theorems proved in \cite{LN}.
\begin{theorem}[Li and Ning \cite{LN}]\label{ThLiNi}
Let $G$ be a balanced bipartite graph on $2n$ vertices, with minimum
degree $\delta(G)\geq k\geq 1$. \\
(1) If $n\geq (k+1)^2$ and $\rho(G)\geq\rho(B^k_n)$, then $G$ is
Hamiltonian unless $G=B^k_n$.\\
(2) If $n\geq (k+1)^2$ and $q(G)\geq q(B^k_n)$, then $G$ is
Hamiltonian unless $G=B^k_n$.\\
(3) If $n\geq 2k$ and $\rho(\widehat{G})\leq\rho(\widehat{B^k_n})$,
then $G$ is Hamiltonian unless $G\in\mathcal{B}_n^k$, or $k=2$,
$n=4$ and $G=L_1$ or $L_2$ (see Fig. 2).
\end{theorem}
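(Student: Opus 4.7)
The plan is to reduce each spectral hypothesis to the edge-count hypothesis of Theorem~\ref{ThLiNi4} and then settle the residual case $G\subseteq B_n^k$ by Perron--Frobenius monotonicity of the relevant spectral invariant.

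For part (1), I would first apply Nosal's inequality (Theorem~\ref{ThBhFrPe}) to convert $\rho(G)\ge\rho(B_n^k)$ into $e(G)\ge\rho(B_n^k)^2$. Because $B_n^k=K_{k,n-k}\sqcup\varPhi_{n-k,k}$ is connected and contains $K_{n,n-k}$ as a proper induced subgraph (on $X\cup Y_1$), Perron--Frobenius gives $\rho(B_n^k)^2>\rho(K_{n,n-k})^2=n(n-k)$, and the hypothesis $n\ge(k+1)^2$ supplies $n(n-k)\ge n(n-k-1)+(k+1)^2$. Hence $e(G)>n(n-k-1)+(k+1)^2$, and Theorem~\ref{ThLiNi4} either delivers Hamiltonicity or forces $G\subseteq B_n^k$; in the latter case strict Perron monotonicity for the connected graph $B_n^k$ rules out $G\subsetneq B_n^k$, so $G=B_n^k$. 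Part (2) follows identical lines with Theorem~\ref{ThLiNi5} replacing Nosal: the bound $q(G)\le e(G)/n+n$ turns the hypothesis into $e(G)\ge n(q(B_n^k)-n)$, and $q(K_{n,n-k})=2n-k$ together with strict monotonicity of $q$ under edge addition on connected graphs gives $q(B_n^k)>2n-k$, hence $e(G)>n(n-k)$, and the argument closes exactly as before.

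For part (3), I would split on the dichotomy supplied by Theorem~\ref{ThLiNi4} applied to a non-Hamiltonian $G$ with $\delta(G)\ge k$. If $G\subseteq B_n^k$, then $\widehat G\supseteq\widehat{B_n^k}$, so $\rho(\widehat G)\ge\rho(\widehat{B_n^k})$; combined with the hypothesis this inclusion is an equality, and the characterisation recorded in Section~2.1 ($\rho(\widehat G)=\rho(\widehat{B_n^k})$ iff $G\in\mathcal B_n^k$ for any spanning subgraph $G$ of $B_n^k$) closes the branch. Otherwise Theorem~\ref{ThLiNi4} forces $e(G)\le n(n-k-1)+(k+1)^2$, hence $e(\widehat G)\ge nk+n-(k+1)^2$, and the task is to show that any such $\widehat G$ has $\rho(\widehat G)>\sqrt{k(n-k)}=\rho(\widehat{B_n^k})$, outside the exceptional pair $L_1,L_2$ at $(k,n)=(2,4)$.

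The main obstacle is this last lower bound: the crude estimate $\rho(\widehat G)\ge e(\widehat G)/n$ is too weak for small $k$. To overcome it I would distinguish by whether $\widehat G$ contains a copy of $K_{k+1,n-k}$ or $K_{n-k,k+1}$, in which case $\rho(\widehat G)\ge\sqrt{(k+1)(n-k)}>\sqrt{k(n-k)}$ follows immediately from Perron--Frobenius; when no such copy exists, a K\H{o}v\'ari--S\'os--Tur\'an-type bound caps $e(\widehat G)$ below the required threshold once $n$ is past a mild bound in terms of $k$, and the finitely many remaining small orders are settled by inspection, singling out $L_1$ and $L_2$ as the only failures.
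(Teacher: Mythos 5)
First, a framing note: the paper does not prove this theorem at all --- it is recalled verbatim from \cite{LN} ("Finally, we recall two theorems proved in \cite{LN}"), so there is no in-paper proof to compare against; I am judging your proposal on its own merits and against the route that the paper's imported toolkit (Lemma \ref{LeClosed}, Theorem \ref{ThBoCh}, Theorems \ref{ThMinrhoG} and \ref{ThMinqG}) is evidently designed to support. Your parts (1) and (2) are correct and are essentially the expected argument: Theorem \ref{ThBhFrPe} (resp.\ Theorem \ref{ThLiNi5}) plus $\rho(B_n^k)>\rho(K_{n,n-k})=\sqrt{n(n-k)}$ (resp.\ $q(B_n^k)>q(K_{n,n-k})=2n-k$) converts the spectral hypothesis into $e(G)>n(n-k)\geq n(n-k-1)+(k+1)^2$ when $n\geq(k+1)^2$, Theorem \ref{ThLiNi4} then leaves only the case $G\subseteq B_n^k$, and strict Perron--Frobenius monotonicity on the connected graph $B_n^k$ forces $G=B_n^k$. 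No objection there.

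Part (3), however, has two genuine gaps. First, Theorem \ref{ThLiNi4} requires $n\geq 2k+1$, while part (3) assumes only $n\geq 2k$; your opening dichotomy is therefore unavailable for the entire infinite family $n=2k$, which is exactly where the exceptional graphs live ($L_1,L_2$ occur at $n=2k=4$). These are not "finitely many small orders settled by inspection." Second, and more seriously, the closing step of your second branch is false as stated. Forbidding $K_{k+1,n-k}$ and $K_{n-k,k+1}$, where one side has linear size $n-k$, gives a K\H{o}v\'ari--S\'os--Tur\'an/Zarankiewicz bound of order $n^2$, which comes nowhere near capping $e(\widehat{G})$ below $nk+n-(k+1)^2$. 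Concretely, for $k=1$ let $\widehat{G}$ be the union of two disjoint perfect matchings: it has $2n\geq 2n-4$ edges, contains no $K_{2,n-1}$ and no $K_{n-1,2}$, yet $\rho(\widehat{G})=2<\sqrt{n-1}$ for $n\geq 6$. This shows that no argument using only the edge count of $\widehat{G}$ (plus forbidden complete bipartite subgraphs) can force $\rho(\widehat{G})>\sqrt{k(n-k)}$ in this branch; the non-Hamiltonicity of $G$ must enter structurally, and your plan never uses it past the invocation of Theorem \ref{ThLiNi4}. The repair --- and the reason the paper records Theorem \ref{ThMinrhoG} and the closure machinery --- is to pass to $H=cl_B(G)$, which by Theorem \ref{ThBoCh} is non-Hamiltonian and B-closed, so every edge $uv$ of $\widehat{H}$ satisfies $d_{\widehat{H}}(u)+d_{\widehat{H}}(v)\geq n$; since $\delta(H)\geq k$ gives $\Delta(\widehat{H})\leq n-k$, every edge of $\widehat{H}$ satisfies $d_{\widehat{H}}(u)\,d_{\widehat{H}}(v)\geq k(n-k)$, whence Theorem \ref{ThMinrhoG} yields $\rho(\widehat{G})\geq\rho(\widehat{H})\geq\sqrt{k(n-k)}=\rho(\widehat{B_n^k})$ for every $n\geq 2k$, and it is the equality condition of that theorem (regular or semi-regular bipartite components) that produces $\mathcal{B}_n^k$ together with the two exceptions $L_1,L_2$. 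Your first branch ($G\subseteq B_n^k$, handled via the Section 2.1 remark) is fine, but without something like the closure argument the second branch and the $n=2k$ case do not go through.
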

\begin{center}
\begin{picture}(200,80)
\thicklines

\put(0,0){\multiput(20,30)(20,0){4}{\circle*{4}}
\multiput(20,70)(20,0){4}{\circle*{4}}
\multiput(80,30)(-20,0){3}{\line(0,1){40}}
\put(20,30){\line(1,2){20}} \put(20,30){\line(1,1){40}}
\put(20,30){\line(3,2){60}} \put(20,70){\line(1,-2){20}}
\put(20,70){\line(1,-1){40}} \put(20,70){\line(3,-2){60}}
\put(45,10){$L_1$} }

\put(90,0){\multiput(20,30)(20,0){4}{\circle*{4}}
\multiput(20,70)(20,0){4}{\circle*{4}}
\multiput(80,30)(-20,0){4}{\line(0,1){40}}
\put(20,30){\line(1,2){20}} \put(20,30){\line(1,1){40}}
\put(20,30){\line(3,2){60}} \put(20,70){\line(1,-2){20}}
\put(20,70){\line(1,-1){40}} \put(20,70){\line(3,-2){60}}
\put(45,10){$L_2$} }

\end{picture}

\small Fig. 2. The graphs $L_1$ and $L_2$.
\end{center}

\begin{theorem}[Li and Ning \cite{LN}]\label{ThLiNi'}
Let $G$ be a balanced bipartite graph on $2n$ vertices. If
$q(\widehat{G})\leq n$, then $G$ is Hamiltonian unless
$G\in\bigcup_{k=1}^{\lfloor n/2\rfloor}\mathcal{B}_n^k$, or $n=4$
and $G=L_1$ or $L_2$ (see Fig. 2).
\end{theorem}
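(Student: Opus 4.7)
The plan is to assume $G$ is non-Hamiltonian with $q(\widehat{G}) \leq n$ and deduce that $G$ must belong to the advertised list. First I would use subgraph monotonicity of $q$: if $G$ had an isolated vertex, then $\widehat{G}$ would contain $K_{1,n}$, so $q(\widehat{G}) \geq n+1$; thus $\delta(G) \geq 1$. More generally, $\widehat{G}$ cannot contain any $K_{p,q}$ with $p + q > n$, which translates into a Moon--Moser-type codegree condition inside $G$. The same calculation also verifies that $\bigcup_{k=1}^{\lfloor n/2\rfloor} \mathcal{B}_n^k$ saturates the hypothesis: for $G' = H \sqcup \varPhi_{n-k,k} \in \mathcal{B}_n^k$, the bipartite complement splits as a disjoint union $\widehat{G'} = \widehat{H} \cup K_{n-k,k}$, giving $q(\widehat{G'}) = \max(q(\widehat{H}), n) = n$.

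Set $k := \delta(G) \geq 1$. I would pass to the B-closure $\bar{G} = cl_B(G)$, which remains non-Hamiltonian by Theorem \ref{ThBoCh}, keeps $\delta(\bar{G}) \geq k$, and only gains edges. If I could push through the lower bound $e(\bar{G}) > n(n-k-1) + (k+1)^2$, Theorem \ref{ThLiNi4} would force $\bar{G} \subseteq B_n^k$; together with the minimum-degree condition this pins $\bar{G}$ (and hence $G$) into $\mathcal{B}_n^k$. This is the natural analogue of how Theorem \ref{ThLiNi}(3) handles the $\rho(\widehat{G})$ case, the difference being that the clean edge inequality $\rho(G)^2 \leq e(G)$ of Theorem \ref{ThBhFrPe} is not available here.

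The main obstacle is exactly this replacement: Theorem \ref{ThLiNi5} bounds $q$ \emph{from above} by an edge expression, the wrong direction for our purposes. To bypass it I would combine two ingredients. First, the forbidden-$K_{p,q}$ structure in $\widehat{G}$ from the first paragraph caps the codegrees in $\widehat{G}$ and thus forces many edges into $G$. Second, Theorem \ref{ThMinqG} gives $q(\widehat{G}) \geq \min\{d_{\widehat{G}}(u) + d_{\widehat{G}}(v) : uv \in E(\widehat{G})\}$, so $q(\widehat{G}) \leq n$ supplies a non-edge $uv$ of $G$ with $d_G(u) + d_G(v) \geq n$ -- precisely the Bondy--Chv\'atal threshold driving the closure. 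Iterating these two inputs against the edges of $\widehat{G}$ should deliver the edge bound needed. Finally, the exceptional graphs $L_1, L_2$ at $n = 4$ emerge from a direct inspection of the few balanced bipartite graphs on $8$ vertices with $q(\widehat{G}) \leq 4$ where the size hypotheses of Theorems \ref{ThLiNi3}--\ref{ThLiNi4} are tight and the closure argument degenerates.
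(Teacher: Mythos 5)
A preliminary point: this paper never proves Theorem \ref{ThLiNi'} --- it is imported verbatim from \cite{LN} --- so your attempt can only be judged against the proof there and against the toolkit this paper assembles around it (Lemma \ref{LeClosed}, Theorem \ref{ThMinqG} with its equality clause). Judged either way, the proposal has a fatal gap at its center. Your plan is to prove $e(cl_B(G))>n(n-k-1)+(k+1)^2$ with $k=\delta(G)$ and then invoke Theorem \ref{ThLiNi4}. That inequality is not provable from the hypotheses, because graphs satisfying all of them violate it: take $G=R_n^k=K_{k,k}\cup K_{n-k,n-k}\in\mathcal{B}_n^k$ (the member with $H=\varPhi_{k,n-k}$). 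It is non-Hamiltonian, has $\delta(G)=k$ and $q(\widehat{G})=n$ by Lemma \ref{LeCompare}, and it is already B-closed, since every nonadjacent pair in distinct parts has degree sum exactly $k+(n-k)=n$, below the closure threshold $n+1$. Yet
\[
n(n-k-1)+(k+1)^2-e\bigl(R_n^k\bigr)=n(n-k-1)+(k+1)^2-k^2-(n-k)^2=(k-1)(n-k+1)+2>0,
\]
so the closure falls short of your edge bound for every $k\geq1$. Hence no amount of ``iterating'' your two ingredients can deliver the bound; the missing branch --- the one that handles sparse closures --- is the entire content of the theorem. The same example exposes the off-by-one in your second ingredient: Theorem \ref{ThMinqG} applied to $\widehat{G}$ produces only a \emph{single} nonadjacent pair with $d_G(u)+d_G(v)\geq n$, and degree sum $n$ is \emph{not} the Bondy--Chv\'atal threshold, which is $n+1$; in $R_n^k$ every such pair has degree sum exactly $n$ and the closure adds nothing at all. (A smaller inaccuracy: $cl_B(G)\subseteq B_n^k$ together with $\delta(G)\geq k$ does not pin $G$ into $\mathcal{B}_n^k$; for that one needs $q(\widehat{G})\leq n$ combined with $K_{n-k,k}\subseteq\widehat{B_n^k}\subseteq\widehat{G}$ and the equality remark of Section 2.)

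The route that works --- and the reason this paper records Theorem \ref{ThMinqG} together with its equality characterization --- is to apply that theorem in the opposite direction, to $\widehat{cl_B(G)}$ rather than to $\widehat{G}$, which removes any need for an edge count. Since $cl_B(G)$ is B-closed, \emph{every} edge $uv$ of $\widehat{cl_B(G)}$ satisfies $d(u)+d(v)\geq n$ there; if $G$ is non-Hamiltonian then $cl_B(G)\neq K_{n,n}$ by Theorem \ref{ThBoCh}, so $\widehat{cl_B(G)}$ has an edge-containing component $C$, and Theorem \ref{ThMinqG} gives $q(C)\geq n$. On the other hand, $\widehat{cl_B(G)}\subseteq\widehat{G}$ yields $q(C)\leq q(\widehat{cl_B(G)})\leq q(\widehat{G})\leq n$. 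Equality therefore holds throughout, and the equality clause of Theorem \ref{ThMinqG} forces $C$ to be regular or semi-regular bipartite with degree sum exactly $n$ on every edge. The structural analysis of this equality case is what produces the stated list: the admissible components are complete bipartite graphs $K_{n-k,k}$, which upon descending from $cl_B(G)$ back to $G$ give $\bigcup_{k}\mathcal{B}_n^k$, plus the sporadic $2$-regular component $C_6$ when $n=4$, whose complement (together with the two leftover vertices) is $L_2$, with $L_1$ one edge below it. Your proposal never invokes the equality clause of Theorem \ref{ThMinqG}, and without it --- or some substitute for the unobtainable edge bound --- the argument cannot close.
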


\section{Proofs}
In this section, we prove our main theorems.

\bigskip\noindent\textbf{Proof of Theorem \ref{ThrhoBG}.}
Suppose that $G$ is not traceable. If $k\geq 1$, then by Lemmas
\ref{LeCompare}, \ref{LeQn1Sn1}(1) and Theorem \ref{ThBhFrPe},
$$\sqrt{e(G)}\geq\rho(G)\geq\rho(Q_n^k)>\sqrt{n(n-k-1)}.$$
Thus, we have
\begin{align*}
e(G)>n(n-k-1)\geq n(n-k-2)+(k+2)^2
\end{align*}
when $n\geq(k+2)^2$. Since $n\geq(k+2)^2>2k+3$, by Lemma \ref{LeTraceable}, $G\subseteq
Q_n^k$ or $k=1$ and $G\subseteq R_n^1$. If $k\geq 2$, then
$G\subseteq Q_n^k$. But if $G\varsubsetneq Q_n^k$, then
$\rho(G)<\rho(Q_n^k)$, a contradiction. Thus $G=Q_n^k$. If $k=1$,
then $G\subseteq Q_n^1$ or $G\subseteq R_n^1$. But if $G\subseteq
Q_n^1$ or $G\varsubsetneq R_n^1$, then by Lemma \ref{LeQn1Sn1}(1),
we get $\rho(G)<\rho(R_n^1)$, a contradiction. Thus $G=R_n^1$.

Now assume that $k=0$. If $G$ has no isolated vertex, i.e.,
$\delta(G)\geq 1$, then by the above analysis,
$$\rho(G)\leq\rho(R_n^1)=n-1<\rho(Q_n^0)=\sqrt{n(n-1)},$$
a contradiction. Thus $G$ has an isolated vertex and $G\subseteq
Q_n^0$. But if $G\varsubsetneq Q_n^0$, then $\rho(G)<\rho(Q_n^0)$, a
contradiction. Thus $G=Q_n^0$.
{\hfill$\Box$}

\bigskip\noindent\textbf{Proof of Theorem \ref{ThqBG}.}
Suppose that $G$ is not traceable. If $k\geq 1$, then by Lemma
\ref{LeCompare} and Theorem \ref{ThLiNi5}, we have
$$\frac{e(G)}{n}+n\geq q(G)\geq q(Q_n^k)>2n-k-1.$$
Thus, we have
\begin{align*}
e(G)>n(n-k-1)\geq n(n-k-2)+(k+2)^2
\end{align*}
when $n\geq(k+2)^2$. By Lemma \ref{LeTraceable}, $G\subseteq
Q_n^k$ or $k=1$ and $G\subseteq R_n^1$. If $k\geq 2$, then
$G\subseteq Q_n^k$. But if $G\varsubsetneq Q_n^k$, then $q(G)<q(Q_n^k)$, a
contradiction. Thus $G=Q_n^k$. If $k=1$, then $G\subseteq Q_n^1$ or
$G\subseteq R_n^1$. But if $G\varsubsetneq Q_n^1$ or $G\subseteq R_n^1$,
then by Lemma \ref{LeQn1Sn1}(2), we obtain $q(G)<q(Q_n^1)$,
a contradiction. Thus $G=Q_n^1$.

Now assume that $k=0$. If $G$ has no isolated vertex, i.e.,
$\delta(G)\geq 1$, then by the analysis above and Lemma \ref{LeQn1Sn1}(2), we obtain
$$q(G)\leq q(Q_n^1)<q(Q_n^0)=2n-1,$$
a contradiction. Thus $G$ has an isolated vertex and $G\subseteq
Q_n^0$. But if $G\varsubsetneq Q_n^0$, then $q(G)<q(Q_n^0)$, a
contradiction. Thus $G=Q_n^0$.
{\hfill$\Box$}

\bigskip\noindent\textbf{Proof of Theorem \ref{ThrhoBGC}.}
Suppose that $G$ is not traceable. Then $G$ is not hamiltonian. If
$k\geq 1$, then
$$\rho(\widehat{G})\leq\rho(\widehat{R_n^k})=\rho(\widehat{B_n^k})=\sqrt{k(n-k)}.$$ By
Theorem \ref{ThLiNi}, $G\in\mathcal{B}_n^k$, or $k=2$, $n=4$ and
$G=L_1$ or $L_2$. But if $G\in\mathcal{B}_n^k\backslash\{R_n^k\}$,
or $G=L_1$ or $L_2$, then $G$ is traceable, a contradiction. Thus we
conclude $G=R_n^k$.

Now assume that $k=0$. If $G$ has no isolated vertex, i.e.,
$\delta(G)\geq 1$, then by the above analysis,
$$\rho(\widehat{G})\leq\rho(\widehat{R_n^1})=\sqrt{n-1}<\rho(\widehat{Q_n^0})=\sqrt{n},$$
a contradiction. This implies that $G$ has an isolated vertex and
$G\subseteq Q_n^0$. But if $G\varsubsetneq Q_n^0$, then
$\rho(\widehat{G})>\rho(\widehat{Q_n^0})$, a contradiction. Thus
$G=Q_n^0$.
{\hfill$\Box$}

\bigskip\noindent\textbf{Proof of Theorem \ref{ThqBGC}.}
Suppose that $G$ is not traceable. Then $G$ is not hamiltonian. By
Theorem \ref{ThLiNi'}, $G\in\bigcup_{k=1}^{\lfloor
n/2\rfloor}\mathcal{B}_n^k$, or $n=4$ and $G=L_1$ or $L_2$. But if
$G\in\bigcup_{k=1}^{\lfloor
n/2\rfloor}(\mathcal{B}_n^k\backslash\{R_n^k\})$, or $G=L_1$ or
$L_2$, then $G$ is traceable, a contradiction. Thus we conclude
$G\in\{R_n^k: 1\leq k\leq\lfloor n/2\rfloor\}$.
{\hfill$\Box$}

\bigskip\noindent\textbf{Proof of Theorem \ref{ThrhoNBG}.}
Let $\{X,Y\}$ be the partition of $V(G)$ such that $|X|=n-1$ and
$|Y|=n$. Let $G'$ be the graph obtained from $G$ by adding one new
vertex $x'$ and connecting $x'$ to every vertex in $Y$ by an edge.
Clearly $G$ is traceable if and only if $G'$ is Hamiltonian.

If $k\geq 1$, then by Lemma \ref{LeCompare} and Theorem
\ref{ThBhFrPe},
$$\sqrt{e(G)}\geq\rho(G)\geq\rho(S_n^k)>\sqrt{n(n-k-1)}.$$
Thus, we have
\begin{align*}
e(G)>n(n-k-1)\geq n(n-k-2)+(k+1)^2
\end{align*}
when $n\geq(k+1)^2$. This implies that $e(G')>n(n-k-1)+(k+1)^2$.
Note that $\delta(G')\geq\delta(G)\geq k$. By Theorem \ref{ThLiNi4},
$G'$ is Hamiltonian or $G'\subseteq B_n^k$. Thus, $G$ is traceable or
$G\subseteq S_n^k$ or $G\subseteq T_n^{k-1}$. But if $G\varsubsetneq
S_n^k$, then $\rho(G)<\rho(S_n^k)$; if $G\subseteq T_n^{k-1}$, then
$\delta(G)\leq k-1$. Thus $G=S_n^k$.

Now assume that $k=0$. If $G$ has no isolated vertex, then
$\delta(G)\geq 1$. If $n=2$, then clearly $G$ is traceable. So we
may assume that $n\geq 3$. By the above analysis, and by Lemma
\ref{LeQn1Sn1}(1),
$$\rho(G)\leq\rho(S_n^1)<\rho(T_n^0)=n-1,$$ a contradiction. This
implies that $G$ has an isolated vertex, and $G\varsubsetneq T_n^0$ or
$G\subseteq\varGamma_n^0$. But if $G\varsubsetneq T_n^0$ or
$G\subseteq\varGamma_n^0$, then $\rho(G)<\rho(T_n^0)$, a
contradiction. Thus $G=T_n^0$.
{\hfill$\Box$}

\bigskip\noindent\textbf{Proof of Theorem \ref{ThqNBG}.}
Let $G'$ be defined as in the proof of Theorem \ref{ThrhoNBG}. If
$k\geq 1$, then by Lemma \ref{LeCompare} and Theorem \ref{ThLiNi5}, we have
$$\frac{e(G)}{n}+n\geq q(G)>2n-k-1.$$
Note that here we consider $G$ as a balanced bipartite graph having an isolated vertex. Thus
\begin{align*}
e(G)>n(n-k-1)\geq n(n-k-2)+(k+1)^2
\end{align*}
when $n\geq(k+1)^2$. This implies that $e(G')>n(n-k-1)+(k+1)^2$.
Note that $\delta(G')\geq\delta(G)\geq k$. By Theorem \ref{ThLiNi4},
$G'$ is Hamiltonian or $G'\subseteq B_n^k$. Thus $G$ is traceable or
$G\subseteq S_n^k$ or $G\subseteq T_n^{k-1}$. But if $G\varsubsetneq S_n^k$,
then $q(G)<q(S_n^k)$; if $G\subseteq T_n^{k-1}$, then
$\delta(G)\leq k-1$. Thus $G=S_n^k$.

Now assume that $k=0$. If $G$ has an isolated vertex, then
$G\subseteq T_n^0$ or $G\subseteq\varGamma_n^0$. But if $G\varsubsetneq
T_n^0$ or $G\subseteq\varGamma_n^0$, then $q(G)<q(S_n^1)$, a
contradiction. Here notice that $q(T_n^0)=q(\varGamma_n^0)=2n-2$,
and $K_{n,n-2}\varsubsetneq S_n^1$. So we assume that
$G$ has no isolated vertex, i.e., $\delta(G)\geq 1$. By the above
analysis, $G$ is traceable unless $G=S_n^1$.
{\hfill$\Box$}

\bigskip\noindent\textbf{Proof of Theorem \ref{ThrhoNBGC}.}
We suppose first that $k\geq 1$. Let $G'$ be defined as in the proof
of Theorem \ref{ThrhoNBG}. Note that $\delta(G')\geq\delta(G)\geq k$
and
$\rho(\widehat{G'})=\rho(\widehat{G})\leq\rho(\widehat{S_n^k})=\rho(\widehat{B_n^k})$.
By Theorem \ref{ThLiNi}, $G'$ is Hamiltonian unless
$G'\in\mathcal{B}_n^k$, or $k=2$, $n=4$ and $G'=L_1$ or $L_2$. Thus
$G$ is traceable unless $G\in\mathcal{S}_n^k$ or
$G\in\mathcal{T}_n^{k-1}$, or $n=4$, $k=2$ and $G=\emph{\L}$. But if
$G\in\mathcal{T}_n^{k-1}$, or $n=4$, $k=2$ and $G=\emph{\L}$, then
$\delta(G)\leq k-1$, a contradiction. Thus $G\in\mathcal{S}_n^k$.

Now assume that $k=0$. Then
$\rho(\widehat{G})\leq\rho(\widehat{T_n^0})=\rho(\widehat{S_n^1})$.
If $G$ has no isolated vertex, then $\delta(G)\geq 1$ and by the
above analysis, $G$ is traceable unless $G\in\mathcal{S}_n^1$. If
$G$ has an isolated vertex, then $G\subseteq\varGamma_n^0$ or
$G\subseteq T_n^0$. But if $G\subseteq\varGamma_n^0$ or $G\varsubsetneq
T_n^0$, then $\rho(\widehat{G})>\rho(\widehat{T_n^0})$, a contradiction. Thus
$G\in\mathcal{S}_n^1\cup\{T_n^0\}$.
{\hfill$\Box$}

\vskip\baselineskip\noindent\textbf{Proof of Theorem \ref{ThqNBGC}.}
Let $G'$ be defined as in the proof of Theorem \ref{ThrhoNBG}. Note
that $\delta(G')\geq\delta(G)\geq k$,
$q(\widehat{G'})=q(\widehat{G})\leq n$. By Theorem \ref{ThLiNi'},
$G'$ is Hamiltonian unless $G'\in\bigcup_{k=1}^{\lfloor
n/2\rfloor}\mathcal{B}_n^k$, or $n=4$ and $G'=L_1$ or $L_2$. Thus
$G$ is traceable unless $G\in(\bigcup_{k=1}^{\lfloor
(n-1)/2\rfloor}\mathcal{S}_n^k)\cup(\bigcup_{k=0}^{\lfloor
n/2\rfloor-1}\mathcal{T}_n^k)$, or $n=4$ and $G=\emph{\L}$. The
proof is complete.
{\hfill$\Box$}
\section{Concluding remarks}
In fact, during our proofs of main theorems, we have actually proved the following theorems.
All these results maybe stimulate our further study.
\begin{theorem}\label{Th6.1}
Let $G$ be a balanced bipartite graph on $2n$ vertices, with minimum
degree $\delta(G)\geq k$, where $k\geq 1$ and $n\geq(k+2)^2$.
If $\rho(G)\geq\sqrt{n(n-k-1)}$,
then $G$ is traceable unless $G\subseteq
Q_n^k$ or $k=1$ and $G\subseteq R_n^1$.
\end{theorem}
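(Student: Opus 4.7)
The plan is to adapt the proof of Theorem \ref{ThrhoBG} to the slightly weaker spectral hypothesis. The core idea is to convert the spectral assumption into an edge-count bound strong enough to trigger the structural Lemma \ref{LeTraceable}. Suppose, for contradiction, that $G$ satisfies all the hypotheses but is not traceable.

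First, I would apply Nosal's bound (Theorem \ref{ThBhFrPe}) to get
$$\sqrt{e(G)}\geq \rho(G)\geq \sqrt{n(n-k-1)},$$
and hence $e(G)\geq n(n-k-1)$. This is the point where the present proof deviates from the proof of Theorem \ref{ThrhoBG}: there, the inequality was strict because Lemma \ref{LeQn1Sn1} gave $\rho(Q_n^k)>\sqrt{n(n-k-1)}$, but under the weaker hypothesis $\rho(G)\geq\sqrt{n(n-k-1)}$, strictness is not automatic. I would recover it by ruling out the equality case: if $e(G)=n(n-k-1)$, then $\rho(G)=\sqrt{e(G)}$, and the equality characterization of Nosal's inequality (the biadjacency matrix has rank one) forces $G$ to be a complete bipartite graph together with possibly isolated vertices. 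Since $\delta(G)\geq k\geq 1$ rules out isolated vertices, $G=K_{n,n}$, which is traceable --- contradicting our assumption. Hence $e(G)>n(n-k-1)$ strictly.

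Next, using $n\geq (k+2)^2$, I compute
$$e(G)>n(n-k-1)=n(n-k-2)+n\geq n(n-k-2)+(k+2)^2,$$
and note that $n\geq (k+2)^2\geq 2k+3$ for $k\geq 1$. All hypotheses of Lemma \ref{LeTraceable} are therefore met, so Lemma \ref{LeTraceable} yields that $G$ is traceable, or $G\subseteq Q_n^k$, or $k=1$ and $G\subseteq R_n^1$. The first alternative contradicts the non-traceability assumption, leaving exactly the two exceptional families claimed in the statement.

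The only real obstacle is the Nosal equality case, which must be disposed of to upgrade $e(G)\geq n(n-k-1)$ to the strict inequality needed precisely when $n=(k+2)^2$; once this is handled, the rest is a routine reduction to the already proved Lemma \ref{LeTraceable}.
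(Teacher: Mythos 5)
Your proof is correct and follows essentially the same route by which the paper obtains Theorem \ref{Th6.1}: the paper never writes a separate argument, but extracts it from the proof of Theorem \ref{ThrhoBG}, namely Nosal's bound (Theorem \ref{ThBhFrPe}) converting the spectral hypothesis into an edge count, followed by Lemma \ref{LeTraceable}. The one place you go beyond the paper is your treatment of the equality case, and this is a genuine improvement rather than padding: under the weaker hypothesis $\rho(G)\geq\sqrt{n(n-k-1)}$ (as opposed to $\rho(G)\geq\rho(Q_n^k)>\sqrt{n(n-k-1)}$, which Lemma \ref{LeCompare} supplies in Theorem \ref{ThrhoBG}), Nosal only yields $e(G)\geq n(n-k-1)$, and when $n=(k+2)^2$ this non-strict bound falls exactly on the threshold $n(n-k-2)+(k+2)^2$ of Lemma \ref{LeTraceable}, which requires strict inequality. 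Your appeal to the equality characterization of Nosal's inequality (rank-one biadjacency matrix, hence a complete bipartite graph plus isolated vertices, hence $G=K_{n,n}$ once $\delta(G)\geq k\geq 1$ excludes isolated vertices, which is traceable) disposes of this boundary case cleanly; the paper's implicit derivation glosses over precisely this point, so your write-up is in fact slightly more rigorous than the source.
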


\begin{theorem}\label{Th6.2}
Let $G$ be a balanced bipartite graph on $2n$ vertices, with minimum
degree $\delta(G)\geq k$, where $k\geq 1$ and $n\geq(k+2)^2$.
If $q(G)\geq 2n-k-1$, then
$G$ is traceable unless $G\subseteq
Q_n^k$ or $k=1$ and $G\subseteq R_n^1$.
\end{theorem}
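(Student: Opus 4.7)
The proof plan mirrors that of Theorem \ref{ThqBG}: the weaker spectral hypothesis $q(G) \geq 2n-k-1$ (which by Lemma \ref{LeCompare} is strictly smaller than $q(Q_n^k)$) should translate into a correspondingly weaker structural conclusion, namely subgraph containment in place of equality.

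Assume for contradiction that $G$ is not traceable. First I would invoke Theorem \ref{ThLiNi5} to obtain
\[
\frac{e(G)}{n} + n \;\geq\; q(G) \;\geq\; 2n-k-1,
\]
and hence $e(G) \geq n(n-k-1)$. The hypothesis $n \geq (k+2)^2$ then yields
\[
n(n-k-1) \;=\; n(n-k-2) + n \;\geq\; n(n-k-2) + (k+2)^2,
\]
and since $n \geq (k+2)^2 \geq 2k+3$ for all $k \geq 1$, all the hypotheses of Lemma \ref{LeTraceable} are in place (modulo the strictness issue noted below). The lemma then delivers $G \subseteq Q_n^k$, or $k = 1$ and $G \subseteq R_n^1$, as required.

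The one subtlety is that Lemma \ref{LeTraceable} asks for strict inequality $e(G) > n(n-k-2) + (k+2)^2$. This is automatic whenever $n > (k+2)^2$, or whenever $q(G) > 2n-k-1$ strictly (which forces $e(G) > n(n-k-1)$ and hence the strict bound). The only degenerate configuration is $n = (k+2)^2$ together with $q(G) = 2n-k-1$ and $e(G) = n(n-k-1)$, in which case all three inequalities above collapse to equalities, in particular forcing the equality case of Theorem \ref{ThLiNi5}. This boundary configuration is the main technical obstacle. I would dispose of it either by exploiting the equality characterization in Theorem \ref{ThLiNi5} (a Merris-type bound whose extremizers are highly constrained) to pin $G$ down directly, or by reinspecting the proof of Lemma \ref{LeTraceable} (which proceeds via the $B$-closure of $G$ and Lemma \ref{LeSubgraph}) to verify that allowing equality at this single value of $n$ still introduces no non-traceable exceptions beyond $Q_n^k$ and, when $k = 1$, $R_n^1$.
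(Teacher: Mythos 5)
Your main line is exactly the paper's own proof: the paper offers Theorem \ref{Th6.2} in Section 6 merely as a byproduct of the proof of Theorem \ref{ThqBG}, which is precisely the combination you describe --- Theorem \ref{ThLiNi5} to convert the spectral hypothesis into $e(G)\geq n(n-k-1)$, followed by Lemma \ref{LeTraceable}. The boundary subtlety you isolate is genuine, and it is worth stressing that it is a gap in the paper's own assertion, not only in your write-up: in the proof of Theorem \ref{ThqBG} the strictness $e(G)>n(n-k-1)$ is inherited from $q(G)\geq q(Q_n^k)>2n-k-1$ (Lemma \ref{LeCompare}), and exactly this strictness is lost under the weaker hypothesis $q(G)\geq 2n-k-1$; the paper nowhere treats the configuration $n=(k+2)^2$, $e(G)=n(n-k-1)$, $q(G)=2n-k-1$. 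So your proposal is as complete as the paper's argument, and more honest about where it is thin.

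Since you leave that configuration open, here is how your first suggested repair can be carried out; it shows the configuration is vacuous. Equality $q(G)=e(G)/n+n$ must hold in Theorem \ref{ThLiNi5}. If $G$ is disconnected, let $G_1$ be a component with $q(G_1)=q(G)=2n-k-1$ and parts of sizes $a_1,b_1$; then $2n-k-1=q(G_1)\leq q(K_{a_1,b_1})=a_1+b_1$, so all remaining components together have at most $k+1$ vertices. But $\delta(G)\geq k\geq 1$ forces every component to have both parts of size at least $k$, hence at least $2k$ vertices, so $2k\leq k+1$, i.e.\ $k=1$; then the unique remaining component is $K_{1,1}$, so $a_1+b_1=2n-2=q(G_1)$, which by Perron--Frobenius forces $G_1\in\{K_{n-1,n-1},K_{n,n-2}\}$, whence $e(G)=e(G_1)+1\in\{n^2-2n+2,\ n^2-2n+1\}$, contradicting $e(G)=n(n-2)$. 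If $G$ is connected, recall that Theorem \ref{ThLiNi5} is proved via the Merris-type bound $q(G)\leq\max_v\{d(v)+m(v)\}$ (\cite{FY,M}), i.e.\ via comparing the spectral radius with the largest row sum of the irreducible nonnegative matrix $D^{-1}Q(G)D$; equality therefore forces $d(v)+m(v)=2n-k-1$ for \emph{every} vertex $v$, and the convexity step behind Theorem \ref{ThLiNi5} then shows each $v$ has either $d(v)=n$, or $d(v)=n-k-1$ with all neighbours of degree $n$. Summing degrees over one part and using $e(G)=n(n-k-1)$ shows no vertex has degree $n$; but then any vertex (of degree $n-k-1\geq k\geq 1$) has a neighbour, which should have degree $n$, a contradiction. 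By contrast, your second suggested route is not promising: the strictness in Lemma \ref{LeTraceable} is inherited from Theorem \ref{ThLiNi3}, i.e.\ from \cite{LN}, and cannot be waived without reworking that paper; the equality analysis above is the right fix.
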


\begin{theorem}\label{Th6.3}
Let $G$ be a nearly balanced bipartite graph on $2n-1$ vertices, with
minimum degree $\delta(G)\geq k$, where $k\geq 1$ and $n\geq(k+1)^2$.
If $\rho(G)\geq\sqrt{n(n-k-1)}$, then $G$ is traceable unless
$G\subseteq S_n^k$.
\end{theorem}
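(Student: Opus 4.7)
The plan is to follow the strategy used in the proof of Theorem~\ref{ThrhoNBG}: pad $G$ to a balanced bipartite graph and apply the structural Hamilton-cycle result. Let $\{X,Y\}$ denote the bipartition of $G$ with $|X|=n-1$ and $|Y|=n$, and form $G'$ on $2n$ vertices by adding a new vertex $x'$ joined to every vertex of $Y$. Then $\delta(G')\geq\delta(G)\geq k$, $e(G')=e(G)+n$, and $G$ is traceable if and only if $G'$ is Hamiltonian. I would assume for contradiction that $G$ is not traceable; hence $G'$ is not Hamiltonian.

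From Theorem~\ref{ThBhFrPe} one has $e(G)\geq\rho(G)^2\geq n(n-k-1)$. I would first argue this inequality must be strict. If equality held throughout, then $\rho(G)^2=e(G)$, and the equality characterization in Nosal's bound for bipartite graphs forces $G$ to be the disjoint union of a complete bipartite graph with isolated vertices. Since $\delta(G)\geq k\geq 1$ forbids isolated vertices, $G$ would itself be complete bipartite, and the only one respecting the bipartition is $G=K_{n-1,n}$; but $K_{n-1,n}$ is clearly traceable, contradicting our assumption. Thus $e(G)>n(n-k-1)$, and using $n\geq(k+1)^2$ this yields
$$e(G')=e(G)+n>n(n-k-1)+(k+1)^2.$$
Theorem~\ref{ThLiNi4} then applies to the non-Hamiltonian graph $G'$ and gives $G'\subseteq B_n^k=K_{k,n-k}\sqcup\varPhi_{n-k,k}$.

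It remains to pin down $G=G'-x'$ by locating $x'$ inside $B_n^k$. Since $x'$ is adjacent to all $n$ vertices on the opposite side in $G'$, it must coincide with one of the degree-$n$ vertices of $B_n^k$; these lie in two classes on opposite sides of $B_n^k$, namely the $k$-part of $K_{k,n-k}$ and the $(n-k)$-part of $K_{k,n-k}$. A direct inspection of $B_n^k-x'$ shows that deleting a vertex of the first class produces $K_{k-1,n-k}\sqcup\varPhi_{n-k,k}=T_n^{k-1}$, while deleting a vertex of the second class produces $K_{k,n-k-1}\sqcup\varPhi_{n-k,k}=S_n^k$. Hence $G\subseteq T_n^{k-1}$ or $G\subseteq S_n^k$. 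In the former case the $k$ vertices of the empty part $\varPhi_{n-k,k}$ of $T_n^{k-1}$ have degree only $k-1$, contradicting $\delta(G)\geq k$. Therefore $G\subseteq S_n^k$, as desired.

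The main obstacle I foresee is extracting the strict inequality $e(G)>n(n-k-1)$ from the borderline hypothesis $\rho(G)\geq\sqrt{n(n-k-1)}$: this step requires invoking the equality characterization for Nosal's bound rather than Theorem~\ref{ThBhFrPe} as a black box, whereas in the proof of Theorem~\ref{ThrhoNBG} the corresponding strict inequality is for free via Lemma~\ref{LeCompare}. Beyond that technical point, the argument parallels the proof of Theorem~\ref{ThrhoNBG} almost verbatim, with $S_n^k$ and $T_n^{k-1}$ playing the roles of their counterparts there.
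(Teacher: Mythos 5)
Your proof is correct, and its skeleton is exactly the paper's: Theorem~\ref{Th6.3} is what the authors extract from the proof of Theorem~\ref{ThrhoNBG}, which uses the same padding vertex $x'$ joined to $Y$, the same application of Theorem~\ref{ThLiNi4} to the non-Hamiltonian graph $G'$, the same identification of the two possible positions of $x'$ inside $B_n^k$ (yielding $G\subseteq S_n^k$ or $G\subseteq T_n^{k-1}$), and the same minimum-degree argument eliminating $T_n^{k-1}$. The one place you deviate is also the one place you improve on the paper. In the proof of Theorem~\ref{ThrhoNBG}, the strict inequality $e(G)>n(n-k-1)$ comes for free from Lemma~\ref{LeCompare}, since there one assumes $\rho(G)\geq\rho(S_n^k)>\sqrt{n(n-k-1)}$; but Theorem~\ref{Th6.3} assumes only $\rho(G)\geq\sqrt{n(n-k-1)}$, so Theorem~\ref{ThBhFrPe} gives merely $e(G)\geq n(n-k-1)$, and then $e(G')\geq n(n-k-1)+(k+1)^2$ can fail to be strict precisely when $n=(k+1)^2$ and $e(G)=n(n-k-1)$ --- a borderline case that the paper's ``we have actually proved this'' claim silently skips, since Theorem~\ref{ThLiNi4} requires strict inequality. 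Your appeal to the equality case of Nosal's bound (equality for a bipartite graph forces a complete bipartite graph plus isolated vertices, which here is excluded by $\delta(G)\geq k\geq1$ together with the traceability of $K_{n-1,n}$) closes this gap correctly. The only caveat is that this equality characterization is not stated in the paper --- Theorem~\ref{ThBhFrPe} is quoted as an inequality only --- so you are importing an external fact; it is indeed proved in the Bhattacharya--Friedland--Peled reference, but in a self-contained write-up you should record it as a separate lemma before using it.
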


\begin{theorem}\label{Th6.4}
Let $G$ be a nearly balanced bipartite graph on $2n-1$ vertices, with
minimum degree $\delta(G)\geq k$, where $k\geq 1$ and $n\geq(k+1)^2$.
If $q(G)\geq 2n-k-1$, then $G$ is traceable unless
$G\subseteq S_n^k$.
\end{theorem}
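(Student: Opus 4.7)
The plan is to mirror the proof of Theorem \ref{ThqNBG}, converting the traceability question into a Hamiltonicity question on an auxiliary balanced bipartite graph and invoking Theorem \ref{ThLiNi4}. Let $\{X,Y\}$ be the bipartition of $G$ with $|X|=n-1$ and $|Y|=n$. Build $G'$ from $G$ by adding one new vertex $x'$ joined to every vertex of $Y$. Then $G'$ is a balanced bipartite graph on $2n$ vertices with $\delta(G')\geq\delta(G)\geq k$, $e(G')=e(G)+n$, and the standard pivot argument already used in the proof of Theorem \ref{ThrhoNBG} shows that $G$ is traceable if and only if $G'$ is Hamiltonian.

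To convert the spectral hypothesis into an edge count, I treat $G$ momentarily as a balanced bipartite graph on $2n$ vertices by appending one isolated vertex to $X$; this operation preserves the signless Laplacian spectrum (it only introduces an extra zero eigenvalue), so Theorem \ref{ThLiNi5} gives
\[
\frac{e(G)}{n}+n \;\geq\; q(G) \;\geq\; 2n-k-1,
\]
whence $e(G)\geq n(n-k-1)$, and therefore
\[
e(G') \;\geq\; n(n-k-1)+n \;\geq\; n(n-k-1)+(k+1)^2
\]
under the hypothesis $n\geq (k+1)^2$. Assuming this last estimate is strict, Theorem \ref{ThLiNi4} applies to $G'$: if $G$ is not traceable then $G'$ is not Hamiltonian, and necessarily $G'\subseteq B_n^k$.

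Now observe that $x'$ has degree exactly $n$ in $G'$. A short degree calculation in $B_n^k = K_{k,n-k}\sqcup\varPhi_{n-k,k}$ shows that only the two ``complete'' parts (of sizes $k$ and $n-k$) contain vertices of full degree $n$, so depending on which side $x'$ lies, deleting $x'$ leaves $G = G'-x'$ as a subgraph of either $S_n^k$ or $T_n^{k-1}$. Computing inside $T_n^{k-1} = K_{k-1,n-k}\sqcup\varPhi_{n-k,k}$ shows that $\delta(T_n^{k-1})=k-1<k$, which contradicts $\delta(G)\geq k$. Hence $G\subseteq S_n^k$, as claimed.

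The main obstacle I anticipate is the doubly tight borderline $n=(k+1)^2$ with $q(G)=2n-k-1$, where the edge count above is sharp and fails the strict hypothesis of Theorem \ref{ThLiNi4} by a single edge. In that case one has to invoke the equality characterization of Theorem \ref{ThLiNi5}, which forces the balanced-with-isolated-vertex version of $G$ to be regular or semi-regular bipartite; a brief inspection of the few configurations that survive this restriction then absorbs the residual case.
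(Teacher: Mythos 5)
Your main line is exactly the paper's own argument (Theorem \ref{Th6.4} is extracted from the proof of Theorem \ref{ThqNBG}): append $x'$ to form $G'$, bound $e(G)$ by applying Theorem \ref{ThLiNi5} to $G$ viewed as a balanced bipartite graph with one isolated vertex, feed $e(G')>n(n-k-1)+(k+1)^2$ into Theorem \ref{ThLiNi4}, locate $x'$ among the degree-$n$ vertices of $B_n^k$, and discard the alternative $G\subseteq T_n^{k-1}$ via the minimum-degree hypothesis; all of that is correct. You are also right that the non-strict hypothesis $q(G)\geq 2n-k-1$ creates a borderline case which the paper itself never confronts: in Theorem \ref{ThqNBG} strictness comes for free from $q(G)\geq q(S_n^k)>2n-k-1$ (Lemma \ref{LeCompare}), whereas here, when $n=(k+1)^2$ and $e(G)=n(n-k-1)$, the strict edge inequality required by Theorem \ref{ThLiNi4} genuinely fails, so the statement with ``$\geq$'' needs an extra argument.

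The gap is in your fix for that borderline case: it rests on a false claim. Equality in Theorem \ref{ThLiNi5} does \emph{not} force the graph to be regular or semi-regular bipartite; that dichotomy is the Feng--Yu/Merris equality condition for \emph{connected} graphs, and the graph you would apply it to is disconnected by construction (it contains the isolated vertex you appended). Equality in Theorem \ref{ThLiNi5} is in fact attained by graphs that are neither: $K_{n,n-k-1}$ together with $k+1$ isolated vertices is balanced bipartite on $2n$ vertices with $q=2n-k-1=e/n+n$. So the ``few configurations that survive'' your restriction form the empty set only because the restriction itself is wrong, and the residual case is not absorbed by any inspection you could legitimately perform. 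A correct repair exists but requires real work absent from your sketch: analyzing the component $C$ attaining $q$ (Feng--Yu applies to $C$, and the vertex-level slack in the proof of Theorem \ref{ThLiNi5} must vanish there), one finds that equality holds precisely when the graph is a complete bipartite graph covering one whole side of the bipartition plus isolated vertices; with $e=n(n-k-1)$ this means $K_{n,n-k-1}$ plus $k+1\geq 2$ isolated vertices, which is incompatible with the fact that your augmented graph has exactly one isolated vertex (all others have degree at least $k\geq 1$). Only with this component-level equality characterization in hand does the borderline case become vacuous and your proof complete.
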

On the other hand, notice that in Theorem \ref{ThMoMo}, the order of a graph is required to be
linear multiple of the minimum degree of a graph. But in our Theorems \ref{ThrhoBG},
\ref{ThqBG}, \ref{ThrhoNBG} and \ref{ThqNBG}, the order of a graph is required
to be at least square multiple of minimum degree of a graph. It is natural
to ask whether the required order could be improved to linear multiple of minimum degree
of the graph. Till now, we cannot solve this problem.

\section*{Acknowledgment}
B.-L. Li was supported by the National Natural Science Foundation of China (No.\ 11601429).
B. Ning was supported by the National Natural Science Foundation of China (No.\ 11601379).
The authors are very grateful to the referee whose suggestions largely
improve the presentation of this paper.

\end{document}